\newtheorem{theorem}{Theorem}[section]
\newtheorem{prop}[theorem]{Proposition}
\newtheorem{lemma}[theorem]{Lemma}
\newtheorem{cor}[theorem]{Corollary}
\theoremstyle{definition}
\newtheorem{remark}[theorem]{Remark}
\newcounter{tenumerate}
\def\P{\mathbb{P}}
\def \a {\alpha}
\def \b {\beta}
\def \BD {{\cal BD}}
\def \D {\Delta}
\def \e {\varepsilon}
\def \END {{\rm END}}
\def \d {\delta}
\def \E {{\mathbb{E}}}
\def \F {{\cal F}}
\def \G {{\cal G}}
\def \k {\kappa}
\def \l {{\cal L}}
\def \P {{\mathbb{P}}}
\def \p {{\cal P}}
\def \R {\mathbb{R}}
\def \s {\sigma}
\def \SS {{\cal S}}
\def \SL {{\cal SL}}
\def \t {{\cal T}}
\def \Var {{\rm Var}}
\def \Z {{\mathbb{Z}}}
\def \( {\left( }
\def\) {\right) }
\def\[ {\left[}
\def\]{\right]}
\def\qed{\hfill $\Box$}
\begin{document}

\title{Liouville first passage percolation: geodesic length exponent is strictly larger than 1 at high temperatures}

\author{ Jian Ding\thanks{Partially supported by an NSF grant DMS-1455049, an Alfred Sloan fellowship, and NSF of China 11628101.} \\ University of Chicago \and Fuxi Zhang\thanks{Supported by NSF of China 11371040 and 11771027.}  \\
Peking University
}
\date{}

\maketitle

\begin{abstract}

Let $\{\eta(v): v\in V_N\}$ be a discrete Gaussian
free field in a two-dimensional box $V_N$ of side length
$N$ with Dirichlet boundary conditions. We study the Liouville first
passage percolation, i.e., the shortest path metric where each vertex is given a weight of $e^{\gamma \eta(v)}$ for some $\gamma>0$. We show that for sufficiently small but fixed
$\gamma>0$, with probability tending to $1$ as $N\to \infty$, all geodesics between vertices of macroscopic Euclidean distances simultaneously have (the conjecturally unique) length exponent strictly larger than 1.
\end{abstract}

\section{Introduction}
Let $B \subseteq \Z^2$ be finite and non-empty. The discrete Gaussian free field (DGFF) $\{\eta^B (v) : v\in B\}$ with Dirichlet boundary conditions is a mean-zero Gaussian process with
$$\E \eta^{B} (x) \eta^B (y) = \E_x \sum_{t = 0}^{\tau - 1} 1_{\{ S_t = y \}} \ \ \mbox{ for all  }x, y \in B\,,$$
where $\{ S_t : t = 0,1,2, \cdots \}$ is a simple random walk starting from $x$, and $\tau$ is the hitting time to the boundary $\partial B = \{ z \in B : \exists \ w \in B^c \mbox{ such that } z \mbox{ is a neighbor of } w \}$.

Let $V_N = [0, N]^2 \cap \Z^2$. We set $V_{3N} = [-N, 2N]^2 \cap \Z^2$, and believe that there is no ambiguity since $[0, 3N]^2 \cap \Z^2$ will not be used.
A path $P$ in $V_N$ is a sequence of vertices $v_0, v_1, \cdots, v_d$ in $V_N$, where $v_i$ is a neighbor of $v_{i+1}$ for all $i$. The weight of $P$ is defined to be
$w (P)  : = \sum_{z \in P} \exp \{ \gamma \eta^{V_{3N}} (z) \}$,  where $\gamma > 0$ plays the role of inverse-temperature. For $x, y\in V_N$, the Liouville first passage percolation (FPP) distance between $x$ and $y$ is defined to be $\min_{_P} w(P)$, where the minimization is taken over all paths in $V_N$ joining $x$ and $y$. The (unique with probability 1) minimizer is defined to be the geodesic between $x$ and $y$, and denoted by ${\rm Geo}_{N,x,y}$. Let $\| x-y \|$ be the Euclidean distance of $x$ and $y$, and $|A|$ be the cardinality of a finite set $A \subseteq \Z^2$. Our main result is on the (conjecturally unique) length exponent of such geodesics.
 \begin{theorem} \label{Thm.dimension}
There exists $\gamma_0>0$ such that the following holds. For each $\gamma \in (0, \gamma_0)$, there exists $\a = \a (\gamma ) > 0$ such that for every $\k \in (0,1)$,
 $$
\lim_{N \to \infty} \P \( \big| {\rm Geo}_{N,x,y} \big| > N^{1 + \a} \ \ \ \mbox{ for all } x,y \in V_N \mbox{ with } \| x-y \| \ge \k N \)  = 1\,.
 $$
 \end{theorem}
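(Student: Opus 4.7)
The plan is to bound the geodesic length from below by dividing its weight by the maximum per-vertex cost. Since
$$
w({\rm Geo}_{N,x,y}) \;=\; \sum_{z \in {\rm Geo}_{N,x,y}} e^{\gamma \eta(z)} \;\le\; |{\rm Geo}_{N,x,y}| \cdot e^{\gamma M_{x,y}},
$$
where $M_{x,y} := \max_{z \in {\rm Geo}_{N,x,y}} \eta(z)$, it suffices to establish, uniformly over $x, y$ with $\|x-y\| \ge \kappa N$: a polynomial lower bound $w({\rm Geo}_{N,x,y}) \ge N^{1+\alpha_1}$ and a matching upper bound $M_{x,y} \le \alpha_1 (\log N)/(2\gamma)$. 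Taking $\alpha = \alpha_1/2$ then gives the theorem.

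First I would establish the weight lower bound. Any geodesic between such $x, y$ contains a sub-path crossing some rectangle of side $\Theta(\kappa N)$, so by positivity of the weights it suffices to bound from below the left-right crossing weights of rectangles of bounded aspect ratio and side length $\Theta(N)$. The log-correlated structure of $\eta$ forces $\E e^{\gamma \eta(z)}$ to grow polynomially in $N$ for bulk vertices; a multi-scale concentration argument (in the spirit of Russo-Seymour-Welsh for LFPP developed in prior work of Ding and collaborators) should promote this to a path-wise lower bound $N^{1+\alpha_1(\gamma)}$ with $\alpha_1(\gamma) > 0$ for small $\gamma$.

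The more delicate ingredient is the maximum bound on $M_{x,y}$, which I would obtain by a rerouting argument. If the geodesic visits a vertex $v$ with $\eta(v) \ge K$, I would show that one can construct a detour loop around $v$ at a suitably chosen scale $r = r(K)$ whose weight is strictly less than that of the original segment through $v$. Such a detour exists because the conditional field on an annulus of radius $r$ around $v$, given $\eta(v)=K$, has mean of order $K\cdot(1-\log r/\log N)$ and unconditional fluctuations of order $\sqrt{\log r}$; by a percolation-type argument a positive density of annuli admit loops with weighted length bounded by $r\cdot e^{\gamma(K-\delta(r,K))}$. Optimizing $r$ forces a strict improvement whenever $K > \alpha_1(\log N)/(2\gamma)$, contradicting minimality of the geodesic and yielding the required bound on $M_{x,y}$.

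The main obstacle will be carrying out the rerouting argument uniformly over all possible high-$\eta$ vertices $v$ along the geodesic and simultaneously over all pairs $x, y$, while rigorously tracking the correlations between $\eta(v)$ and the field on the rerouting annulus via the hierarchical decomposition of the DGFF. The union bound over $O(N^4)$ endpoint pairs forces polynomially small failure probabilities in both the weight lower bound and the rerouting estimate, so the quantitative versions of both the Russo-Seymour-Welsh-type crossing estimates and the detour construction need careful calibration --- this is the most technically demanding part of the proof.
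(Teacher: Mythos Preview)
Your approach has a fundamental error in the first ingredient. You claim a lower bound $w({\rm Geo}_{N,x,y}) \ge N^{1+\alpha_1}$ with $\alpha_1>0$, but this is false: by \cite{DG16} (see also the paragraph after Theorem~\ref{mainthm} in the present paper), the Liouville FPP weight exponent is \emph{strictly less than} $1$ for small $\gamma$, i.e.\ with high probability $w({\rm Geo}_{N,x,y}) \le N^{1-\gamma^2/(2\cdot 10^3)}$ uniformly in $x,y$. The heuristic you invoke --- that $\E e^{\gamma\eta(z)}$ grows polynomially in $N$ --- is correct for a \emph{fixed} bulk vertex, but the geodesic is a minimizer over paths and systematically seeks out vertices with atypically low $\eta$-values, so no polynomial lower bound above $N$ on its weight can hold. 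Consequently the inequality $|{\rm Geo}| \ge w({\rm Geo})\,e^{-\gamma M_{x,y}}$ cannot produce an exponent exceeding $1$: even if your rerouting argument gave $M_{x,y}=O(1)$, you would only obtain $|{\rm Geo}| \ge c\, w({\rm Geo})$, which is $\le N^{1-c'\gamma^2}$.

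The paper's argument runs in the opposite direction. It takes as input the \emph{upper} bound on $w({\rm Geo})$ from \cite{DG16}, and separately proves (Theorem~\ref{mainthm}) that \emph{every} path $P$ with $\|P\|\ge\kappa N$ and $|P|\le N^{1+\alpha}$ must contain $\ge \tfrac{1}{8}\kappa N$ vertices with $\eta(z)\ge -15\sqrt{\delta}\log N$; hence any such short path has weight $\ge N^{1-16\gamma\sqrt{\delta}}$. Choosing $\delta$ small relative to $\gamma^2$ makes this exceed the geodesic's weight, forcing $|{\rm Geo}|>N^{1+\alpha}$. The substantive work is thus a \emph{deterministic-type lower bound on the weight of all short paths}, proved via a multi-scale tree decomposition of paths, not a lower bound on the geodesic weight itself. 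Your rerouting idea for controlling the maximum along the geodesic is not obviously wrong in isolation, but it is aimed at the wrong quantity: what is actually needed is control from below on a positive fraction of the $\eta$-values along an \emph{arbitrary} short path.
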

Theorem~\ref{Thm.dimension} states that with probability tending to 1 as $N\to\infty$, all geodesics simultaneously have length exponent strictly larger than 1. We note that Benjamini \cite{Benjamini10} asked the question on the dimension, which is analogous to the length exponent here, of the (conjecturally well-defined) scaling limit of geodesics. Indeed, in  \cite{Benjamini10}  it was suspected that this dimension is strictly larger than 1.

Theorem~\ref{Thm.dimension} follows from a combination of results in \cite{DD16, DG16} and the following theorem (which may be of independent interest). Let $\| P \| = \| x - y \|$ if $P$ is a path from $x$ to $y$. Set
 $$
\p_{\k, \a} =  \big\{ P : P \mbox{ is a path in } V_N, \ \| P \| \ge \k N,  \mbox{ and } |P| \le N^{1 + \a } \big\}  .
 $$

  \begin{theorem} \label{mainthm}
For each $\d \in (0,1)$, there exists $\a = \a (\d) > 0$ such that for every $\k \in (0,1)$,
 $$
\lim_{N \to \infty} \P \( \left| \left\{ z \in P : \eta^{V_{3N}} (z) \ge -  15 \sqrt \d \log N \right\} \right| \ge \frac 1 8 \k N \ \ \mbox{ for all } P \in \p_{\k, \a } \) = 1 .
 $$
 \end{theorem}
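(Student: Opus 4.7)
The plan is to introduce the \emph{low set} $L = \{z \in V_{5N}: \eta^{V_{5N}}(z) < -15\sqrt{\d}\log N\}$ and show that with probability tending to $1$, the complement $L^c$ ``cuts'' the plane finely enough that every path $P \in \p_{\k,\a}$ is forced to hit $L^c$ at least $\k N / 8$ times. The backbone of the argument is a multi-scale concentric-annulus construction. For a dyadic family of scales $r = 2^k$ with $k$ ranging from a constant up to $\log(\k N)$, and for centers $u$ on a grid of spacing $r$, let $G_{u,r}$ be the event that the annulus $B(u,2r) \setminus B(u,r)$ contains a circuit around $u$ consisting entirely of vertices in $L^c$. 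By planar site-percolation duality (with 4-connectivity on one side and 8-connectivity on the other), $G_{u,r}^c$ is precisely the event that $L$ contains a crossing of this annulus from its inner to its outer boundary.

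The first technical step is to bound $\P(G_{u,r}^c)$ uniformly in $u$ and $r$ by a negative power of $N$. I would do this through the Gibbs--Markov decomposition of the GFF: write $\eta^{V_{5N}} = \phi + \psi$, where $\phi$ is the harmonic extension of the field from the boundary of a mesoscopic block and $\psi$ is the conditionally independent residual GFF inside. Combined with the Gaussian tail estimate $\P(\eta^{V_{5N}}(v) < -15\sqrt{\d}\log N) \le N^{-c\d}$, and conditioning on a ``typical'' realization of $\phi$, the crossing event reduces to an approximately independent site-percolation problem of very small density on each block; a union bound over candidate low-crossing paths of length $\ge r$ then yields $\P(G_{u,r}^c) \le N^{-\rho}$ for some $\rho = \rho(\d)>0$. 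A further union bound over the polynomially many pairs $(u,r)$ shows $\bigcap_{u,r} G_{u,r}$ occurs with probability $1 - o(1)$.

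The second step is a deterministic combinatorial argument on the event $\bigcap_{u,r} G_{u,r}$. Any $P \in \p_{\k,\a}$ has displacement at least $\k N$, so at an appropriately chosen small scale $r_0$, the path must traverse of order $\k N / r_0$ disjoint annuli of scale $r_0$ whose centers lie on the tracked grid. The constraint $|P| \le N^{1+\a}$ with $\a = \a(\d)$ small controls how many times any single annulus can be re-entered, so the crossings contribute roughly $\k N$ distinct vertices of $L^c$ to $P$, which after losses yields at least $\k N/8$. The main obstacle will be the crossing-probability bound in the first step: the positive correlations of the GFF produce genuine clustering of low vertices, and extracting the sub-critical behavior requires a delicate scale-by-scale conditioning together with quantitative Gaussian concentration at the level $-15\sqrt{\d}\log N$. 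The final choice of $\a(\d)$ emerges from balancing the path-counting factor, of order $N^{O(\a)}$, against the per-annulus failure exponent $N^{-\rho(\d)}$.
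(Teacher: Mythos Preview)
Your proposal takes a route entirely different from the paper's and contains a gap that appears fatal for small $\d$.

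The mismatch is between the count you need and what annulus circuits can deliver. An $L^c$-circuit in an annulus of scale $r$ forces a crossing path to pick up \emph{one} vertex of $L^c$ there, not $r$ of them; so $\sim \k N / r_0$ disjoint annuli at a fixed scale $r_0$ yield only $\sim \k N/r_0$ distinct $L^c$-vertices, and you would need $r_0 = O(1)$ to reach $\k N / 8$. But at scale $r_0 = O(1)$ the event $G_{u,r_0}^c$ is essentially that a bounded cluster of neighboring vertices all lie in $L$; since neighboring GFF values differ by $O(1)$ while the threshold $-15\sqrt\d \log N$ diverges, this has probability comparable to the one-point bound $\P(\eta^{V_{5N}}(v) < -15\sqrt\d\log N) \asymp N^{-c\d}$ for a fixed constant $c$. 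The union bound over the $\sim N^2$ unit-scale annuli is then $N^{2-c\d}$, which diverges once $\d < 2/c$. The theorem must hold for every $\d \in (0,1)$, and in the application to Theorem~\ref{Thm.dimension} one takes $\d < 10^{-10}\gamma^2$, so the small-$\d$ regime is exactly the one that matters. Passing to a mesoscopic scale $r_0 = N^\b$ and summing circuit hits over all dyadic scales in $[r_0,N]$ still produces only $O(\k N/r_0) = o(N)$ points. The constraint $|P|\le N^{1+\a}$ limits re-entries into an annulus but does not convert a single circuit intersection into $r_0$ distinct $L^c$-vertices; your claim that ``the crossings contribute roughly $\k N$ distinct vertices'' is the step that does not follow. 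The Gibbs--Markov decomposition does not create approximate independence at unit scale either, because there is no fine field left there: essentially all of the $\log N$ variance sits in the harmonic part.

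The paper avoids level-set percolation altogether. It writes $\eta^{V_{5N}} = \sum_{j=0}^{m-1}\eta_j + H_{K^m}$ and associates to each path a tree whose nodes are subpaths at geometric scales $K^j$. Two inputs drive the proof: a purely combinatorial fact (Proposition~\ref{Lem.Lemma2}) that for $|P| \le N^{1+\a}$ the uniform flow through \emph{untame} nodes is at most $2\d m$; and a local Gaussian estimate (Proposition~\ref{Prop.Lemma1}) that for a tame subpath the fraction of children with $\eta_j \ge \e k$ is small, uniformly over all tame subpaths with prescribed end-boxes. A multi-scale induction (Lemma~\ref{Lem.inductionxir}, Proposition~\ref{Lem.Lemma3}) then handles all paths simultaneously and shows that most leaves $z$ satisfy $\eta_j(z) < \e k$ for all but $O(\d m)$ scales $j$; summing with $\e = \tfrac12\sqrt\d$ gives $\eta^{V_{5N}}(z) \le C\sqrt\d\log N$, and symmetry yields the theorem. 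The essential difference is that the argument never asks whether the sub-level set crosses annuli; it tracks, scale by scale, how much of the field can accumulate along any short path, and this is what makes the result uniform in small $\d$.
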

By Theorem~\ref{mainthm}, with probability tending to 1 as $N\to \infty$, we have that
 \begin{equation}\label{eq-thm-consequence}
w (P) \ge \frac 1 8 \k N e^{- 15 \gamma \sqrt \d \log N} \ge N^{1 - 16 \gamma  \sqrt \d} \ \  \mbox{ for all } P \in \p_{\k, \a }\,.
 \end{equation}
By \cite[Theorem 1.3]{DG16}, we have that (for $\gamma < \gamma_0$)   as $N\to \infty$,
 $$
\max_{x, y\in V_N} \E \big( w ({\rm Geo}_{N,x,y} ) \big) \le N^{1 - \gamma^{4/3} / (\log \gamma^{-1})^2}\,.
 $$
By \cite[Theorem 3.1 and Proposition 6.7]{DD16} (where we use \cite[Theorem 3.1]{DD16} to verify assumptions in \cite[Proposition 6.7]{DD16}), we have that for $\gamma < \gamma_0$ and every $\epsilon>0$, there exists $C = C_\epsilon>0$ such that
 $$
\max_{x, y\in V_N} w ({\rm Geo}_{N,x,y} )\leq C  \max_{x, y\in V_N} \E \big( w ({\rm Geo}_{N,x,y} ) \big) \,
 $$
with probability at least $1-\epsilon$. The main result of \cite{DD16} is  the existence of sub-sequential scaling limit, and as an intermediate step it was also proved that the diameter has the same order as the distance between two fixed vertices (as stated above).
  Combining the last two displays, we get that with probability tending to 1
as $N\to \infty$,
 $$
\max_{x, y\in V_N} w ({\rm Geo}_{N,x,y} ) \le N^{1 - \gamma^{4/3} / (\log \gamma^{-1})^3}\,.
 $$
Setting $\d$ sufficiently small depending on $\gamma$ (e.g., $\d  < \gamma$) and combining the preceding inequality with \eqref{eq-thm-consequence}, we conclude that ${\rm Geo}_{N,x,y}  \notin \p_{\k, \a }$, establishing Theorem~\ref{Thm.dimension}. Thus, the main task of the present paper is to prove Theorem~\ref{mainthm}.

 \begin{remark}
In fact, for every (not necessarily small) $\gamma > 0$, if one can show that the distance exponent for the Liouville FPP is strictly less than 1, then combined with Theorem 1.2 this will yield that geodesics have length exponent strictly larger than 1.
 \end{remark}

 \begin{remark}
If one considers the DGFF in $V_N$, Theorem~\ref{Thm.dimension} and Theorem~\ref{mainthm} also hold, with an additional assumption that $x$, $y$ are away from boundary, i.e. $\| x -z \|$, $\| y- z \| \ge a N$ for all $z \in \partial V_N$, where $a \in (0, 1)$ and is fixed. The proof is essentially the same. We choose to consider the DGFF in $V_{3N}$ to avoid unnecessary cumbersome notation.
  \end{remark}
 \begin{remark} \label{Rem.openquestions}
The proof of Theorem~\ref{mainthm} should work for other log-correlated Gaussian fields with $\star$-scale invariant kernels as in \cite{DRSV14}. In particular, our method should be adaptable for proving an analogue of Theorem~\ref{mainthm} for every reasonable discrete approximation of the \emph{continuous} Gaussian free field. There are two related open problems which are not considered in the present paper: (i) as the discrete approximation gets finer, whether geodesics converge to those of the continuous GFF; (ii) whether geodesics of the continuous GFF have dimension strictly larger than 1.
 \end{remark}

\subsection{Related works}

Recently, there have been a few works on the Liouville first passage percolation metric \cite{DD16, DG16}. As mentioned above, in \cite{DG16} an upper bound on the distance exponent was derived, and in \cite{DD16} a sub-sequential scaling limit was proved for the normalized metric. Our work addresses another important aspect of this random metric. That is, geodesics under this random metric have fractal structures, in the sense that they have length exponent strictly larger than 1. (In contrast, for the classical FPP with i.i.d.\ weights, assuming say the distribution of the weight is continuous, geodesics have length exponent 1 \cite[Theorem~4.6]{ADH17}.) This, in turn, emphasizes the fractal nature of the Liouville FPP,  which is drastically different from the classical FPP.

In \cite{DL16}, the chemical distance (i.e., the graph distance in the induced open cluster) for the percolation of level sets of the two-dimensional DGFF was studied. In particular, \cite[Theorem 1.1]{DL16} implies that there exists a path of length exponent 1 and Liouville FPP weight $O \( N^{1+o(1)} \) $ joining the two boundaries of an annulus. This can be regarded as a complement of our Theorem~\ref{mainthm}. In \cite{DZ15}, a non-universality result was proved on the distance exponent for geodesics among log-correlated Gaussian fields. This suggests that the distance exponent is subtle. In light of \cite{DZ15}, there seems to be no reason to expect that the geodesic length exponent is universal among log-correlated fields.

Another related work is \cite{GHS16}, where the authors studied a type of discrete metric associated with the Liouville quantum gravity (LQG), and gave some bounds on the distance exponent of corresponding geodesics. While the Liouville FPP metric is expected to be related to random metric on the LQG, we refrain ourselves from an extensive discussion on the LQG metric. An interested reader is referred to \cite{MS15b,MS16} (as well as references therein) for a body of recent works on the construction of a metric in the continuum as well as its connection to the Brownian map. However, so far we see no mathematical connection between our work and \cite{GHS16,MS15b,MS16}. Finally, we remark that in a recent work \cite{LW16} the authors studied a random pseudo-metric on a graph defined via the zero-set of the Gaussian free field on its metric graph.

\medskip

\subsection{Notation convention}
 For  $z = (z_1, z_2), \ w = (w_1, w_2) \in \R^2$, let
 $$
 |z - w|_\infty = |z_1 - w_1| \vee |z_2 - w_2| \ \ \ \mbox{ and } \ \ \  \| z- w \| = \sqrt{ |z_1 - w_1|^2 + |z_2 - w_2|^2 }
 $$
be the $L^\infty$-distance and the $L^2$-distance of $z$ and $w$, respectively. Let $d (z, B) = \inf_{w \in B} \| z- w \| $. Similarly, we have $d (B_1, B_2)$, $d_\infty (z, B)$ and $d_\infty (B_1, B_2)$.

By side length of a box, we mean the $L^2$-distance of its two lower corners. For $x = (x_1, x_2) \in \Z^2$ and $\ell \in 2 \Z_+$, let
 $$
B_\ell (x) := \big( [x_1 - \ell / 2, x_1 + \ell/2] \times [x_2 - \ell / 2, x_2 + \ell / 2] \big) \cap \Z^2
 $$
be the box centered at $x$ and of side length $\ell$ (with the convention that $B_0 (x) = \{ x \}$). For $x \in \R^2$ and $\ell > 0$, let
 $$
B(x, \ell) = \{ z \in \R^2 : \| x - z \| \le \ell \}
 $$
be the $\ell_2$-ball centered at $x$ and of radius $\ell$.

For an integer $r \ge 1$, let $[r] = \{ 1, \ldots, r \}$. Let $\{ S_t : t  =  0,1,2, \cdots \}$ be a simple random walk on $\Z^2$, and $\tau_D$ be the time it hits $D$.

\subsection{Convention for constants and parameters}

Throughout this paper, let $C_0, C_1, C_2, \ldots > 0$ be universal constants. Let $k \in \Z_+$ and $K = 2^k$ be fixed and large. Here and in what follows, by $K$ is large we mean that it is large with respect to universal constants, so that we can assume that the inequalities such as $C_2 \le C_1 k$ and $ K^{\frac 1 {K^2 k}} e^{ - \frac 1 {K^2+1} } < 1$ hold.

Let $\d, \k \in (0,1)$ be as in Theorem~\ref{mainthm}, and $\e > 0$. Let $K (\d, \e)$ be a fixed and large integer relying on $\e$ and $\d$, and of the form $2^k$. Similarly, we have $K_1 (\e), K_2 (\e), K_3 (\d)$.

Recall that $V_N = [0, N]^2 \cap \Z^2$ and $V_{3N} = [-N, 2 N]^2 \cap \Z^2$. Let $V_{2N} = \left[ - \frac 1 2 N, \frac 3 2 N \right]^2 \cap \Z^2$. We will consider the limiting behavior when $N \to \infty$. Set $m \in \Z_+$ such that
 $$
K^{m+1} \le \k N < K^{m+2}.
 $$
Note that $m \to \infty$ since $N \to \infty$ and $K, \k$ are fixed. Thus, we can assume without loss that the inequalities such as $(K e^{-k})^{\d m} < \d$ hold.

\subsection{Outline of the proof}

The general proof strategy in this paper is multi-scale analysis, which has seen powerful applications in the percolation theory (see, e.g., \cite{CCD88, DM90, Chayes95, Chayes96, Orz98, AB99}). In particular, our proof of Theorem~\ref{mainthm} is inspired by the methods employed in \cite{Chayes96, Orz98} (especially, \cite{Orz98}) in the study of the fractal percolation process. A particular instance of the fractal percolation process is as follows. Partition $V = [0,1]^2$ into identical boxes $\{ B_{r, i} \}$ of side length $2^{-r}$. Remove each $B_{r,i}$, $r \ge 1$, $i \in [2^{2r}]$ independently with probability $p$. The vertices which are removed (that is, at least one of the boxes containing the vertex is removed) are said to be closed, and those retained are said to be open. In the supercritical case, it was known from \cite{CCD88} that with non-vanishing probability there exists an open crossing, where by crossing we mean a path  connecting the left and right boundaries of $V$.  In \cite{Chayes96, Orz98} it was shown that each such open crossing has (box-counting) dimension strictly larger than 1. Roughly speaking, the proof strategy of \cite{Orz98} is that in every scale $2^{-r}$, a sufficient number of boxes in $\{ B_{r,i}, i \in [2^{2r}] \}$ are removed, which forces each open crossing to take detours in this scale with a non-vanishing frequency. Thus, each open crossing has dimension strictly larger than 1.

The framework of our proof is similar in spirit to that of \cite{Orz98} since the two-dimensional DGFF has a similar hierarchical structure (see Section~\ref{Sect.hiearchy}). Recall $K^{m+1} \le \k N < K^{m+2}$. We will show that for each $x\in V_N$ we have $\eta^{V_{3N}} (x) \approx \sum_{j=0}^{m-1} \eta_j (x)$, where $\eta_0(x), \ldots, \eta_{m-1}(x)$ are mutually independent and of mean zero.  In addition, we have that $\E \eta_j (x)^2 = O(k)$, and that $\eta_j (x)$ and $\eta_j (y)$ are independent if $|x-y| \ge K^{j+1}$. However, there are two important differences from the fractal percolation process, as follows.
 \begin{itemize}
\item In the fractal percolation process, a vertex $z$ is removed as long as one removes the box in $\big\{ B_{r,i}, i \in [2^{2r}] \big\}$ containing $z$ for some $r$. However, in our context, one may naturally define a vertex $z$ to be removed if $\eta^{V_{3N}} (z) \ge - 15 \sqrt{\d} \log N$ (15 here is a number that is chosen somewhat arbitrarily for convenience of exposition later; similar for $\tfrac18$ below). Thus, whether a vertex is removed or not can only be verified by putting together the values $\eta_j (z)$'s for (almost) all $j$'s, rather than by the value of  $\eta_j (z)$ for some $j$. This is a huge difference between the fractal percolation process and our model.

\item  In the fractal percolation process, one only needs to demonstrate that for each crossing with dimension close to $1$, there exists one vertex on it which is removed. However, in our context, we must show that for each $P \in \p_{\k, \a}$, there exist on $P$ a large number (namely, at least $\frac 1 8 \k N$) of vertices which are all removed.
\end{itemize}

In order to address the aforementioned difficulties, we will associate each path $P$ in
 $$
\p_{\k} : = \{ P :  \| P \| \ge \k N \}
 $$
with a tree $\t_P$ via a deterministic construction, as demonstrated in Section~\ref{Sect.sub-paths}. The nodes of $\t_P$ correspond to a family of sub-paths of $P$, where the parent/child relation in $\t_P$ corresponds to path/sub-path relation in the plane. In particular, the root corresponds to $P$, the leaves correspond to vertices on $P$, and a node $u$ in the $r$-th level of $\t_P$ corresponds to a sub-path in scale $K^{m-r}$ (see \eqref{Eq.SLj} below for definition). We will define the tame/untamed property of a sub-path (see \eqref{Eq.tildeExy} below for definition), equivalently, of a node. Roughly speaking, a sub-path is tame if it lies in an ellipse whose focuses are its endpoints and whose ratio of width to height is $K$ to 1. Thus, it behaves more or less like a straight segment in the corresponding scale (recall that $K$ is large).

With the above definitions at hand, we will show that tame nodes have degree $K$, while untamed nodes have degree $\ge K$ always and have degree $\ge K+1$ at non-vanishing frequency on every branch of the tree (see Proposition~\ref{Prop.sub-paths}). In addition, note that for $P$ with length exponent close to $1$, the number of leaves in $\t_P$ is small. Thus, untamed nodes should be rare in $\t_P$. But it turns out that counting measure is not a good measurement of rareness for our purpose. In order to be compatible with our analysis later,  in Section~\ref{Sect.Lemma2}  we will consider the uniform flow $\theta_P$ on $\t_P$ instead, and show in Proposition~\ref{Lem.Lemma2} that the fraction of flow supported on untamed nodes is small.

In Section~\ref{Sect.Lemma1}, we will study the behavior of a tame sub-path. For all tame sub-paths with fixed endpoints, in Proposition~\ref{Prop.Lemma1} we will derive a uniform upper bound on the fraction of its \emph{open} children, i.e., the children with $\eta_{j-1} \ge \e k$ (see the beginning of Section~\ref{Sect.proofs} for a rigorous definition for open sub-paths). The proof of Proposition~\ref{Prop.Lemma1} crucially relies on the fact that all these tame sub-paths lie in the same thin tunnel in the corresponding scale (in other words, they are essentially one-dimensional in that scale).

In Section~\ref{Sect.Lemma3}, based on Proposition~\ref{Lem.Lemma2} and Proposition~\ref{Prop.Lemma1} we will prove Proposition~\ref{Lem.Lemma3}, which bounds the fraction (measured with respect to the uniform flow $\theta_P$) of open nodes for all possible paths simultaneously. Thanks to Proposition~\ref{Lem.Lemma2}, we can relatively easily bound the fraction of open nodes with untamed parents, and thus the main challenge for proving Proposition~\ref{Lem.Lemma3} lies on controlling the fraction of open nodes with tame parents. More precisely, we only need to show that simultaneously for all paths in $\p_{\k, \a}$, a typical tame node $u$ in scale $K^j$ contributes a small fraction of open children. In order for a simultaneous control, we will apply a union bound in every scale in an inductive manner, via a multi-scale analysis on corresponding tree structures. Our strategy is in flavor similar to the chaining argument (see \cite{Talagrand14} for an excellent account on this topic), which is a powerful method in bounding the maximum of a random process. The success in our context crucially relies on the introduction of the uniform flow $\theta_P$ as we describe next. Suppose that a node has a large degree $d$. On the one hand, the choices of locations of the children grow exponentially in $d$. On the other hand, (in light of the definition of uniform flow) we will perform an average over $d$ children, and thus obtain a large deviation. In addition, the probability decays exponentially in $d$, thereby beating the growth of enumeration if we choose our parameters appropriately. The key step for the induction analysis will be carried out in Lemma~\ref{Lem.inductionxir}.

Finally, in Section~\ref{Sect.Pfmain} we will show that for each $P\in \p_{\k, \a}$ there are at least $\frac 1 8 \k N$ vertices $z$ on $P$ such that $\sum_{j=0}^{m-1} \eta_j (z) \ge - m \e k = - \sqrt {\d} \cdot O(\log N)$ by setting $\e = O(\sqrt \d)$, completing the proof of Theorem~\ref{mainthm}. The estimates in Section~\ref{Sect.Pfmain} are more or less straightforward, and thus we do not discuss further here.

\medskip

\noindent {\bf Acknowledgement.} We warmly thank an anonymous referee for a detailed report, which leads to a significant improvement on exposition.

\section{Preliminaries on two-dimensional DGFF} \label{Sect.preliminary}

\subsection{Log-correlation} \label{Sect.logcorr}

Recall that $\{ S_t \}$ is a simple random walk on $\Z^2$, and the DGFF in $B$ has covariance given by the Green function associated with $\{ S_t \}$ in $B$. An explicit formula of the Green function is given in \cite[Proposition~4.6.2]{LL10}.
 \begin{equation}  \label{Eq.covariance}
\E \eta^B (x) \eta^B (y) = \E_x \sum_{t=0}^{\tau - 1} 1_{\{ S_t = y \}} = \sum_{z \in \partial B} \P_x (S_\tau = z)  a (z - y) - a (x - y)  ,
 \end{equation}
where $\tau = \tau_{\partial B}$,
 \begin{equation} \label{Eq.axax}
\left| a ( x ) - \( \frac 2 \pi \log \| x \| + \frac {2 \bar {\gamma} + \log 8}{\pi} \) \right| \le C_0 \| x \|^{-2}  \ \ \mbox{ with } a(0) = 0,
 \end{equation}
$\bar {\gamma}$ is the Euler constant, and $C_0 > 0$ is a universal constant \cite[Theorem~4.4.4]{LL10}.

Suppose that $B \subseteq \Z^2$ is a box of side length $\ell$. Set $B^{( \frac 1 {10} )} : = \{ z \in B : d_\infty (z, \partial B) > \frac 1 {10} \ell \}$ (here $\frac 1 {10}$ is a somewhat arbitrary choice). Then, one can check that the DGFF is log-correlated, i.e. that there are universal constants $C_1, C_2 > 0$ such that
 \begin{equation} \label{Eq.logcorrelated}
\left| \E \eta^B (x) \eta^B (y)  - C_1 \log_2 \frac {\ell}{|x-y|_\infty \vee 1} \right| \le C_2 \ \ \mbox{ for all } x, y \in B^{(\frac 1{10})} .
 \end{equation}
 \begin{remark} \label{Rem.aboutC1}
The condition in \eqref{Eq.logcorrelated} is a general assumption for log-correlated Gaussian fields. For the DGFF, $C_1 = \frac 2 {\pi} \log 2$ by \eqref{Eq.axax}.
 \end{remark}
\noindent Moreover, we can choose $C_2$ such that the following hold: For  $\ell \in 2 \Z_+$ and $x_0 \in \Z^2$, set $B = B_{\ell} (x_0)$ (recall that it is the box centered at $x_0$ and of side length $\ell$). Then,
 \begin{equation} \label{Eq.covatboundary}
\E \eta^B (x_0) \eta^B (y) \le C_2 \ \ \mbox{ for all } y \in B \setminus B^{(\frac 1 {10})} .
 \end{equation}
Note that there exists a universal constant $C_3 > 0$ such that for each box $B \subseteq \Z^2$ of side length $\ell$,
 \begin{equation} \label{Eq.Stau}
\P_x (S_\tau = z) \le C_3 \ell^{-1}  \ \ \mbox{for all } x \in B^{(\frac 1 {10})} \mbox{ and } z \in \partial B
 \end{equation}
(see, e.g., \cite[Proposition 8.1.4]{LL10}). Furthermore, for $\ell, \tilde \ell \in 2 \Z_+$ with $\tilde \ell > \ell$, we have
 \begin{equation} \label{Eq.variance}
\left| \E \eta^{B_{\tilde \ell} (x_0)} (x_0)^2 - \E \eta^{B_\ell (x_0)} (x_0)^2 \right| \le \frac {4 C_3 (\tilde \ell - \ell)} {\ell}.
 \end{equation}

For the sake of completeness, next we show \eqref{Eq.logcorrelated}, \eqref{Eq.covatboundary} and \eqref{Eq.variance}.

 \begin{proof}[Proof of \eqref{Eq.logcorrelated}, \eqref{Eq.covatboundary} and \eqref{Eq.variance}.]
Set $C_2 : = 2 C_0 + \log 15 +  \frac {2 \bar {\gamma} + \log 8}{\pi} $.  For \eqref{Eq.logcorrelated}, note that $\ell / 10  \le \| x-z \| \le  \sqrt 2 \ell$ for all $z \in \partial B$. Combined with \eqref{Eq.axax}, this implies that $\left| a(z-x) - \frac 2 \pi \log \ell \right| \le \left| \log \frac {\| x-z \| } \ell \right| + \frac {2 \bar {\gamma} + \log 8}{\pi}  + C_0 \le C_2$. Thus \eqref{Eq.logcorrelated} holds if $x = y$. If $x \neq y$, then $\frac {\ell / 10} { \sqrt 2 | x-y |_\infty} \le \frac {\| z - y \|}{\| x - y \|} \le \frac {\sqrt 2 \ell}{ | x-y |_\infty}$ for all $z \in \partial B$. Combined with \eqref{Eq.covariance} and \eqref{Eq.axax}, this yields that $\left| \big( a(z-y) - a(x-y) \big) -\frac 2 \pi \log \frac {\ell} {| x-y |_\infty} \right| \le \log 15 + 2 C_0 \le C_2$. Thus \eqref{Eq.logcorrelated} holds.

Set $B = B_\ell (x_0)$ and $\tilde B = B_{\tilde \ell} (x_0)$ for brevity. For \eqref{Eq.covatboundary}, since $\frac {\| z-y \|}{\| x_0-y \|} \le \frac {\sqrt 2 \ell}{(\frac 1 2 - \frac 1 {10}) \ell}$ for all $y \in B \setminus B^{(\frac 1 {10})}$ and $z \in \partial B$, one has $a(z - y) - a(x-y) \le 2C_0 + \frac 2 \pi \log \frac {\| z-y \|}{\| x_0 - y \|} \le 2 C_0 + \log 5$. Consequently, $\E \eta^B (x_0) \eta^B (y) \le C_2$, which is \eqref{Eq.covatboundary}. For \eqref{Eq.variance}, denote by $\tau$ and $\tilde \tau$ respectively the times $\{ S_t \}$ hits $\partial B$ and $\partial \tilde B$. Then,
$$\E \eta^{\tilde B} (x_0)^2 - \E \eta^B (x_0)^2 = \E_{x_0} \sum_{t = \tau}^{\tilde \tau - 1} 1_{\{ S_t = x_0 \} } = \sum_{z \in \partial B} \P_{x_0} (S_\tau = z) \E \eta^{\tilde B} (x_0) \eta^{\tilde B} (z) \le \frac {C_3} \ell \E_{x_0} \sum_{t=0}^{\tilde \tau - 1} 1_{\{ S_t \in \partial B \}},$$ where in the last inequality we have used \eqref{Eq.covariance} and \eqref{Eq.Stau}. In addition, it is not difficult to check that $\E_{x_0} \sum_{t=0}^{\tilde \tau - 1} 1_{\{ S_t \in \partial B \}} \le 4 (\tilde \ell - \ell )$ via an analysis of the process $\{ |S_t|_\infty \}$, a similar proof of which is referred to \cite[Lemma~2.2]{DL16}. Then, the proof of \eqref{Eq.variance} is completed.
 \end{proof}

\subsection{Hierarchical structure}  \label{Sect.hiearchy}

For $B \subseteq V_{3 N}$, denote by $H^B$ the Gaussian field satisfying (a) $H^B$ coincides with $\eta^{V_{3N}}$ on $B^c \cup \partial B$,  (b) $H^B$ is harmonic in $B \setminus \partial B$. That is, $H^B$ is the harmonic extension of $\eta^{V_{3N}} |_{B^c \cup \partial B}$. Set $\eta^B : = \eta^{V_{3N}} - H^B$. The well-known Markov property of the DGFF states that $\eta^B$ is a version of the DGFF in $B$ with Dirichlet boundary conditions, and it is independent of $H^B$. In words, $\eta^{V_{3N}} = \eta^B \bigoplus H^B$ is an orthogonal decomposition. This property also holds when $V_{3N}$ is replaced by any $D$ with $B \subseteq D$. Especially, the corresponding  orthogonal decomposition is $\eta^D = \eta^B \bigoplus (\eta^D - \eta^B)$, if $B \subseteq D \subseteq V_{3N}$. We are now ready to introduce the hierarchical structure of $\eta^{V_{3N}}$.

 \begin{figure}[h]
  \includegraphics[width=17cm]{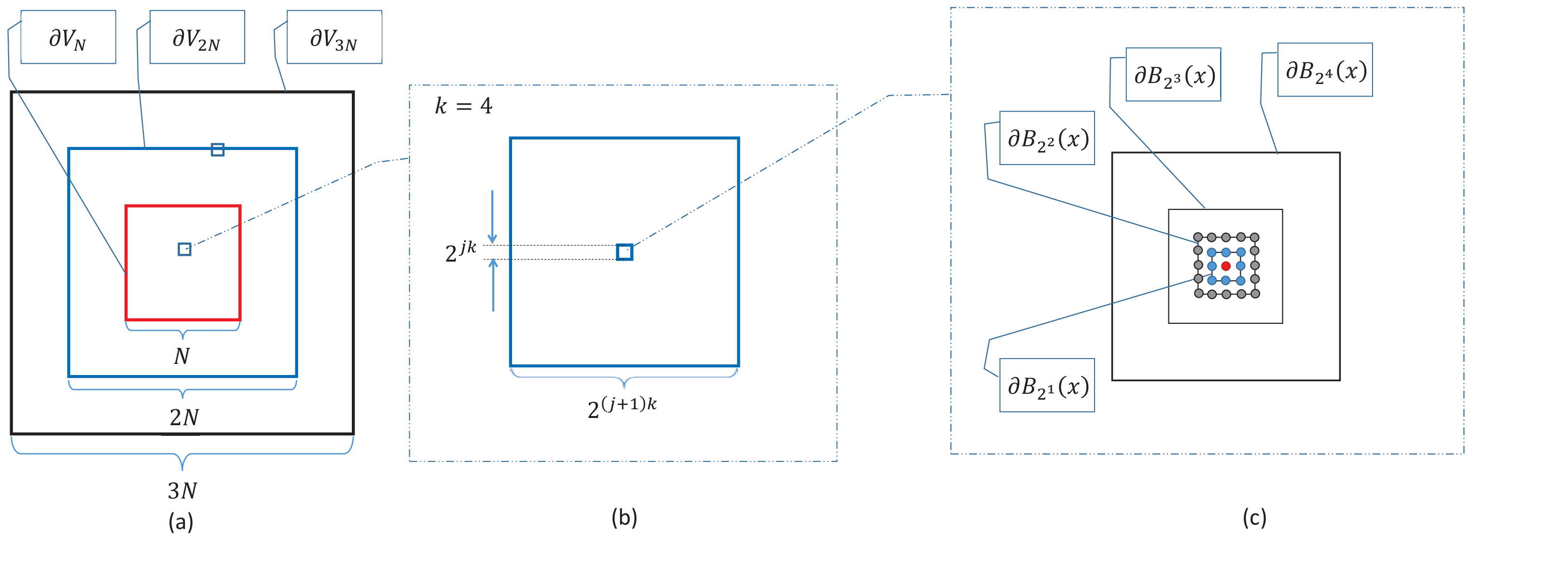}
\\ \vspace{-1cm} \caption{In Figure (a), the tiny boxes are $B_{K^m} (x)$'s for $x \in V_{2N}$. They are tiny and lie inside $V_{3N}^{(10)}$, since $K^m / N \le \k / K$. In Figure (b), the two boxes are samples of $B_{2^{(j+1)k}}$ and $B_{2^{jk}}$ respectively---note that the latter is a tiny sub-box in the former, which visually emphasizes the fact that $k$ is large. In Figure (c), the round points stand for vertices in $\Z^2$: the red one is $x$ and the blue ones are its ($\ell_\infty$-)neighbors.} \end{figure}

Recall that $K = 2^k$ is large but fixed, and $m$ is selected such that $K^{m+1} \le \k N < K^{m+2}$. Let $H_{2^r} (x) : = H^{B_{2^r} (x)} (x)$, $r \ge 1$ for brevity, and let $H_1 (x) := \eta^{V_{3 N}} (x)$. Define
 \begin{equation} \label{Eq.defetaj}
X_r (x) : = H_{2^r} (x) - H_{2^{r+1}} (x) \ \ \ \mbox{and} \ \ \ \eta_j (x) = \sum_{r = jk}^{(j+1) k - 1} X_r (x),
  \end{equation}
where $r = 0, 1, \ldots, m k - 1$ and $j=0,1, \ldots, m-1$. Then
 $$
\eta^{V_{3N}} (x) = \eta (x) + H_{K^m} (x) , \ \ \ \mbox{ where } \eta (x) := \sum_{j=0}^{m-1} \eta_j (x).
 $$
We will mainly study the term $\eta (x) = \sum_{j=0}^{m-1} \eta_j (x)$, while the term $H_{K^m} (x)$ will be controlled relatively easily.

By the Markov property, we have
  \begin{equation} \label{Eq.Xindpt}
\left\{ \begin{array}{l} X_r (x), \  r = 0, 1,  \cdots, mk-1 \mbox{ are mutually independent;} \\ X_r (x) \mbox{ and } X_r (y) \mbox{ are independent if } |x-y|_\infty \ge 2^{r+1}; \\ \eta_j (x) \mbox{ and } \eta_j (y) \mbox{ are independent if } |x-y|_\infty \ge K^{j+1}. \end{array} \right.
 \end{equation}
Furthermore, note $\sum_{i=0}^{r-1} X_i (x) = \eta^{B_{2^r} (x)} (x)$. By \eqref{Eq.logcorrelated} and \eqref{Eq.Xindpt}, we have
 \begin{equation} \label{Eq.C1C2}
\E X_r (x)^2 \le C_1 + 2 C_2 \ \ \ \mbox{and} \ \ \ |\E \eta_j (x)^2 - C_1 k| \le 2 C_2 .
 \end{equation}

Next, we will show that the covariances of all $\eta_j$'s are bounded from below. Recall $V_{2N} = \left[ -\frac 1 2 N, \frac 3 2 N \right]^2 \cap \Z^2$.
 \begin{lemma} \label{Lem.positivecorrelated}
There exists a universal constant $C_4 > 0$ such that $\E \eta_j (x) \eta_j (y) \ge - C_4$ for all $x,y \in V_{2N}$.
 \end{lemma}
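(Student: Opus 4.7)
The plan is to express $\eta_j(x)$ as an integral of $\eta^{V_{5N}}$ against a zero-mass signed measure and then reduce the covariance to a Green-function calculation. Since $H_{K^j}(x) = H^{B_{K^j}(x)}(x)$ is the harmonic average of $\eta^{V_{5N}}$ at the center, we have $H_{K^j}(x) = \int \eta^{V_{5N}}\, d\mu^j_x$, where $\mu^j_x$ is the exit distribution from $B_{K^j}(x)$ of the simple random walk started at $x$ (with the convention $\mu^0_x := \delta_x$). Telescoping the definition of $\eta_j$ gives $\eta_j(x) = \int \eta^{V_{5N}}\, d\nu_x$ with $\nu_x := \mu^j_x - \mu^{j+1}_x$, and therefore
$$
\E \eta_j(x) \eta_j(y) = \iint G^{V_{5N}}(z,w)\, d\nu_x(z)\, d\nu_y(w).
$$

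The next step is to apply \eqref{Eq.covariance} twice, once in each variable. Writing $G^{V_{5N}}(z,w) = \E_z a(S_\tau - w) - a(z-w)$, the harmonic piece $\E_z a(S_\tau - w)$ contributes nothing after integration against $d\nu_x(z)$: by the strong Markov property, starting the walk from either $\mu^j_x$ or $\mu^{j+1}_x$ produces the same exit distribution from $V_{5N}$ as starting from $x$, since $\partial B_{K^{j+1}}(x) \subset V_{5N} \setminus \partial V_{5N}$. This collapses the double integral to $-\iint a(z-w)\, d\nu_x(z)\, d\nu_y(w)$. Applying \eqref{Eq.covariance} again in $w$ for the box $B_{K^j}(y)$ yields $\int a(z-w)\, d\mu^j_y(w) = G^{B_{K^j}(y)}(y,z) + a(z-y)$, and subtracting the analogous identity at level $j+1$ gives $\int a(z-w)\, d\nu_y(w) = -g_y(z)$, where $g_y(z) := G^{B_{K^{j+1}}(y)}(y,z) - G^{B_{K^j}(y)}(y,z) \ge 0$ by monotonicity of the Green function in the domain. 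Combining these,
$$
\E \eta_j(x) \eta_j(y) = \int g_y\, d\mu^j_x - \int g_y\, d\mu^{j+1}_x \ge -\int g_y\, d\mu^{j+1}_x.
$$

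It remains to bound $\int g_y\, d\mu^{j+1}_x$ by a universal constant. From \eqref{Eq.covariance} together with \eqref{Eq.axax} one obtains $g_y(z) \le G^{B_{K^{j+1}}(y)}(y,z) \le \tfrac{2}{\pi}\log(K^{j+1}/|z-y|_\infty) + C$ for $z \in B_{K^{j+1}}(y)\setminus\{y\}$, and $g_y \equiv 0$ off $B_{K^{j+1}}(y)$. The harmonic measure from the center satisfies $\P_x(S_{\tau_{B_{K^{j+1}}(x)}} = z) \le C/K^{j+1}$ on $\partial B_{K^{j+1}}(x)$ (as in \cite[Proposition 8.1.4]{LL10}). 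The required bound then reduces to the elementary log-sum estimate $\frac{1}{\ell}\sum_{z \in \partial B_\ell(x)} \log(\ell/|z-y|_\infty) = O(1)$ uniformly in $y$, whose worst case ($y$ on the boundary) reduces to the familiar one-dimensional bound $\sum_{r=1}^\ell \log(\ell/r) = O(\ell)$.

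The argument is conceptually clean; the main obstacle is bookkeeping rather than ideas. One must handle the degenerate $j=0$ case (where $B_1(y) = \{y\}$, so $G^{B_1(y)} \equiv 0$ and $\mu^0_x = \delta_x$) and verify that $B_{K^{j+1}}(x) \subset V_{5N} \setminus \partial V_{5N}$ for every $x \in V_N$ and every $j \le m-1$ so that the strong Markov step is legitimate; both hold because $K^{j+1} \le K^m \le N$ and $V_N$ sits well inside $V_{5N}$.
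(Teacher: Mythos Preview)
Your argument is correct and lands on exactly the same inequality as the paper, namely $\E \eta_j(x)\eta_j(y) \ge -\int g_y\, d\mu^{j+1}_x$ with $g_y = G^{B_{K^{j+1}}(y)}(y,\cdot) - G^{B_{K^j}(y)}(y,\cdot)$, followed by the same line-sum estimate $\sum_{z\in L}\log(\ell/|z-y|_\infty)=O(\ell)$ against the harmonic-measure bound $\P_x(S_\tau=z)\le C/\ell$. The difference is purely in how that inequality is reached. The paper works directly with the Markov field property: it shows (their \eqref{Eq.corr}) that $\E\eta_j(x)\eta^{V_{5N}}(z)$ equals $g_x(z)$ case by case, observes this is nonnegative, and then drops the $H_{K^j}(y)$ contribution in $\eta_j(y)=H_{K^j}(y)-H_{K^{j+1}}(y)$. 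You instead represent both $\eta_j(x)$ and $\eta_j(y)$ as integrals of $\eta^{V_{5N}}$ against the zero-mass signed measures $\nu_x,\nu_y$, kill the boundary-harmonic piece of $G^{V_{5N}}$ by strong Markov, and reduce to the potential kernel $a(\cdot)$. Your packaging is a bit more symmetric and avoids the three-case analysis; the paper's version makes the positivity $\E\eta_j(x)\eta^{V_{5N}}(\cdot)\ge 0$ explicit, which is conceptually nice. Either way the final bound and the work in the log-sum are identical.
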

 \begin{proof}
We only give the proof for $j \ge 1$, since that for $j = 0$ is similar by setting $B_1 = \{ x \}$ below. Set $B_1 = B_{K^j} (x)$ and $B_2 = B_{K^{j+1}} (x)$  for brevity. Recall $\eta_j (x) = \eta^{B_2} (x) - \eta^{B_1} (x)$. By the Markov property,
 \begin{equation} \label{Eq.corr}
\E \eta_j (x) \eta^{V_{3N}} (z) = \left\{ \begin{array}{ll} 0 & \mbox{ for all } z \in V_{3N} \setminus B_2 , \\ \E \eta^{B_2} (x) \eta^{B_2} (z) & \mbox{ for all } z \in B_2 \setminus B_1 ,  \\ \E \eta^{B_2} (x) \eta^{B_2} (z) - \E \eta^{B_1} (x) \eta^{B_1} (z) & \mbox{ for all } z \in B_1 . \end{array} \right.
 \end{equation}
In the case $z \in B_1$, one has $\E \eta^{B_2} (x) \eta^{B_2} (z) - \E \eta^{B_1} (x) \eta^{B_1} (z) = \E_x \sum_{t= \tau_1}^{\tau_2 - 1} 1_{\{ S_t = z \}}$
by \eqref{Eq.covariance}, where $\tau_1$ and $\tau_2$ are respectively the times $\{ S_t \}$ hits $\partial B_1$ and $\partial B_2$. Therefore, in all these three cases, we have
\begin{equation}\label{eq-positive-relation}
\E \eta_j (x) \eta^{V_{3N}}  (z) \ge 0 .
\end{equation}

Note $\eta_j (y) = H_{K^j} (y) - H_{K^{j+1}} (y)$. By \eqref{Eq.corr} and \eqref{eq-positive-relation},
 $$
\E \eta_j (x) \eta_j (y) \ge - \sum_{z \in \partial \tilde B_2} \P_y (S_\tau = z) \E \eta_j (x) \eta^{V_{3N}} (z) = - \sum_{z \in B_2 \cap \partial \tilde B_2} \P_y (S_\tau = z) \E \eta_j (x) \eta^{V_{3N}} (z) ,
 $$
where $\tilde B_2 = B_{K^{j+1}} (y)$ and $\tau = \tau_{\partial \tilde B_2}$. Note that $B_2 \cap \partial \tilde B_2$ lies in the union of a horizontal line and a vertical line in $B_2$. We will show that for every horizontal or vertical line $L$ in $B_2$,
 \begin{equation} \label{Eq.positivecorrelateionH}
 \sum_{z \in L } \E \eta_j (x) \eta^{V_{3N}} (z)  \le (3 C_1 + 2 C_2 + 1) K^{j+1}.
 \end{equation}
Assuming \eqref{Eq.positivecorrelateionH}, by \eqref{Eq.Stau} and \eqref{eq-positive-relation}, we have  $\E \eta_j (x) \eta_j (y) \ge - 2 (3 C_1 + 2 C_2 + 1) C_3$, completing the proof.

It remains to prove \eqref{Eq.positivecorrelateionH}. Without loss of generality, suppose $x = (0,0)$ and $L = \{ (z_1, z_2) \in B_2 : \ z_2 =b \}$, where  $b \in ( - \frac 1 2 K^{j+1}, \frac 1 2 K^{j+1} ) \cap \Z$.  We will prove  \eqref{Eq.positivecorrelateionH} by dividing $L$ into three parts (as follows) and bounding the contribution for each part separately.

Part 1. By \eqref{Eq.covatboundary} and \eqref{Eq.corr},
 $$
\sum_{z \in L \setminus B_2^{(\frac 1 {10})}} \E \eta_j (x) \eta^{V_{3N}} (z) \le \sum_{z \in L \setminus B_2^{(\frac 1 {10})}} \E \eta^{B_2} (x) \eta^{B_2} (z) \le C_2 \big| L \setminus B^{(\frac 1 {10})} \big| \le C_2 K^{j+1} .
 $$

Part 2. Let $L_0 = \{ (z_1, z_2) \in L : z_ 1 \in  [- \frac 1 2 K^j , \frac 1 2 K^j ] \cap \Z \}$. By \eqref{Eq.logcorrelated} and \eqref{Eq.corr},
 $$
\sum_{z \in ( L \setminus L_0 ) \cap B_2^{(\frac 1 {10}) } }  \E \eta_j (x) \eta^{V_{3N}} (z)   \le 2 \sum_{r = K^j / 2 + 1}^{K^{j+1} / 2} \( C_1 \log_2 \frac {K^{j+1}}{r} + C_2 \)  \nonumber
 \le
(3 C_1 + C_2) K^{j+1}.
 $$

Part 3. Suppose $z \in L_0 \cap B_2^{(\frac 1 {10})}$. If $|b| \ge K^j/2$, one has  $|x-z|_\infty \ge K^j / 2$. Otherwise, we have $z \in B_1$. By \eqref{Eq.corr},
 $$
\E \eta_j (x) \eta^{V_{3N} } (z)
 =
\E_z \sum_{t=\tau_1}^{\tau_2 -1} 1_{\{ S_t = x \}}  \le \max_{w \in \partial B_1} \E_w  \sum_{t=0}^{\tau_2 -1} 1_{\{ S_t = x \}}  = \max_{w \in \partial B_1} \E_w  \eta^{B_2} (x) \eta^{B_2} (w) ,
 $$
where $\tau_1$ and $\tau_2$ are respectively the times $\{ S_t  \}$ hits $\partial B_1$ and $\partial B_2$. Note that $|w - x|_\infty = K^j/2$ for all $w \in \partial B_1$. By \eqref{Eq.logcorrelated}, in both cases we have $\E \eta_j (x) \eta^{V_{3N} } (z)  \le C_1 \log_2 \frac {K^{j+1}}{K^j/2} + C_2$. Therefore,
 $$
\sum_{z \in L_0 \cap B_2^{(\frac 1 {10})} } \E \eta_j (x) \eta^{V_{3N} } (z)  \le (K^j+1) \times \big( C_1 \log_2 (2K) + C_2 \big) \le K^{j+1} .
 $$

Summing the upper bounds in these three parts, we conclude \eqref{Eq.positivecorrelateionH}.
 \end{proof}

 \begin{lemma} \label{Lem.Harmdiff}
There is a universal constant $C_5 > 0$ such that for $j \ge 0$ and $u,v \in V_{2N}$ satisfying $|u - v|_\infty \le K^j$, both $\E (H_{K^j} (u) - H_{K^j} (v))^2$ and $\E ( \eta_j  (u) - \eta_j (v) )^2$ are no more than $C_5 \frac {|u-v|_\infty }{K^j} $.
 \end{lemma}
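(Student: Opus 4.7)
Since $\eta_j = H_{K^j} - H_{K^{j+1}}$, Minkowski in $L^2$ reduces the $\eta_j$ bound to proving, for each $s \ge 1$ with $|u-v|_\infty \le K^s$,
\[
\E\bigl(H_{K^s}(u) - H_{K^s}(v)\bigr)^2 \le C\,\frac{|u-v|_\infty}{K^s};
\]
applying this for $s \in \{j, j+1\}$ and summing the square roots yields the $\eta_j$ estimate up to an absolute constant. So the plan focuses on this bound for $H_{K^j}$, where $u,v \in V_N$ and $B_{K^j}(u), B_{K^j}(v) \subset V_{5N}$.

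I would use a parallel coupling of SRWs: set $S^u_t := u + \xi_t$ and $S^v_t := v + \xi_t$ driven by the same increments $\xi_t$. Since $B_{K^j}(u), B_{K^j}(v)$ are translates by $d := u-v$, we have $\tau_u = \tau_v$ and $S^u_{\tau_u} = S^v_{\tau_v} + d$, so
\[
H_{K^j}(u) - H_{K^j}(v) = \E_W\bigl[\eta^{V_{5N}}(W + d) - \eta^{V_{5N}}(W)\bigr]
\]
with $W := S^v_{\tau_v}$ distributed as the SRW harmonic measure from $v$ on $\partial B_{K^j}(v)$. Squaring and applying Fubini with independent copies $W,W'$,
\[
\E\bigl(H_{K^j}(u)-H_{K^j}(v)\bigr)^2 = \E_{W,W'}\bigl[G^{V_{5N}}(W+d,W'+d) - G^{V_{5N}}(W+d,W') - G^{V_{5N}}(W,W'+d) + G^{V_{5N}}(W,W')\bigr],
\]
whose integrand is the mixed second discrete difference of the Green function in $V_{5N}$.

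Writing $G^{V_{5N}}(w_1,w_2) = -a(w_1-w_2) + h(w_1,w_2)$ via \eqref{Eq.covariance}, with $a(z) = \tfrac{2}{\pi}\log|z|_\infty + \mathrm{const} + O(|z|_\infty^{-2})$ and $a(0)=0$ from \eqref{Eq.axax}, and $h$ a ``harmonic'' piece varying smoothly on the scale of $N$: the second difference of $h$ is $O(|d|^2/N^2)$ and negligible, while the $-a$ part equals $-[2a(r)-a(r+d)-a(r-d)]$ with $r := W-W'$, bounded by $C|d|^2/|r|_\infty^2$ when $|r|_\infty > |d|$ (Taylor) and by $C\bigl(1+\log(|d|_\infty/(|r|_\infty\vee 1))\bigr)$ when $|r|_\infty \le |d|$. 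Since the SRW harmonic measure on $\partial B_{K^j}(v)$ is comparable to uniform on this length-$\sim K^j$ boundary by \cite[Proposition~8.1.4]{LL10}, integrating the two regimes gives
\[
\int_{|d|}^{K^j}\frac{|d|^2}{s^2}\cdot\frac{ds}{K^j} \asymp \frac{|d|}{K^j}, \qquad \int_1^{|d|}\log\!\bigl(|d|/s\bigr)\cdot\frac{ds}{K^j} \asymp \frac{|d|}{K^j},
\]
summing to the claimed $O(|u-v|_\infty/K^j)$.

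The main obstacle is the transition region $|r|_\infty \sim |d|$, where neither of the two regime bounds is sharp and the integrable logarithmic singularity of $2\log|r|_\infty - \log|r+d|_\infty - \log|r-d|_\infty$ must be handled carefully: this is what produces the sharp $|u-v|_\infty/K^j$ scaling rather than an $|u-v|_\infty\log(K^j)/K^j$ estimate. A secondary technical point is that the continuum log-asymptotic of $a$ and the boundary-harmonic smoothness of $h$ require the precise discrete estimates of \cite[Theorem~4.4.4]{LL10} to confirm their contributions are only lower-order.
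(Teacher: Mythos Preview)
Your approach is correct and would go through, but it is genuinely different from the paper's. The paper does not use the parallel coupling at all; instead it introduces an intermediate box $B$ with $B_{K^j}(u)\cup B_{K^j}(v)\subset B\subset B_{K^j+2|u-v|_\infty}(u)$ and splits
\[
H_{K^j}(u)-H_{K^j}(v)=\bigl(H^{B_u}(u)-H^{B}(u)\bigr)-\bigl(H^{B_v}(v)-H^{B}(v)\bigr)+\bigl(H^{B}(u)-H^{B}(v)\bigr).
\]
The first two pieces are handled by the exact variance formula \eqref{Eq.variance}, which gives $\E(H^{B_u}(u)-H^B(u))^2\le\tfrac{2}{\pi}\log\tfrac{K^j+2|u-v|_\infty}{K^j}+O(K^{-2j})\le 2|u-v|_\infty/K^j$ directly, with no integration needed. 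The third piece is bounded via the functional $\varphi(D)=\sum_{z\in\partial D}\bigl(\P_u(S_\tau=z)-\P_v(S_\tau=z)\bigr)\bigl(a(z-u)-a(z-v)\bigr)$, using a Harnack-type difference estimate on the hitting probabilities together with $|a(z-u)-a(z-v)|\le 2|u-v|_\infty/K^j$ pointwise on $\partial B$. This avoids the second-difference-of-$a$ analysis entirely: the paper never needs to split into the regimes $|r|\gtrless |d|$ or worry about the logarithmic singularity you flag as the main obstacle.

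What each buys: your coupling route is cleaner conceptually---one identity, one integral---and makes transparent why the answer scales like $|d|/K^j$ (it is literally the averaged second difference of the log over a boundary of length $K^j$). The paper's three-term decomposition is less elegant but more robust: each term is bounded by a crude pointwise estimate times a counting factor, so no careful integration of the $|r|_\infty\sim|d|$ transition is required, and the discrete error terms in $a$ never need to be tracked beyond \eqref{Eq.variance} and \eqref{Eq.axax}. Both yield the same $C_5|u-v|_\infty/K^j$, and both finish the $\eta_j$ case exactly as you do, via $\eta_j=H_{K^j}-H_{K^{j+1}}$ and the triangle inequality in $L^2$.
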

 \begin{proof}
Suppose $u \neq v$ without loss. Set $C = 24 C_3 (1 + C_0) + 4 C_2$. We will show that for $j = 0, 1, \ldots, m$,
 \begin{equation} \label{Eq.boundHj1}
\E (H_{K^j}  (u) - H_{K^j} (v))^2 \le C |u-v|_\infty / K^j
 \end{equation}
if $|u - v|_\infty \le K^j$. Assuming this, we will complete the proof by setting $C_5 := 4 C$, noting that $\eta_j = H_{K^j} - H_{K^{j+1}}$.

Next, we will show \eqref{Eq.boundHj1}. For $j = 0$, note that $H_{K^0} = \eta^{V_{3N}}$. Then, \eqref{Eq.boundHj1} holds by \eqref{Eq.logcorrelated}. For $j \in [m]$, denote $B_{K^j} (u)$, $B_{K^j} (v)$, $B_{K^j + 2 |u-v|_\infty} (u)$ and $B_{K^j + 2 |u-v|_\infty } (v)$ by $B_u$, $B_v$, $\tilde B_u$ and $\tilde B_v$ for brevity. Without loss of generality, we suppose that $u_1 < v_1$ and $u_2 < v_2$, where $u = (u_1, u_2)$ and $v = (v_1, v_2)$. Let $B$ be the box of side length $K^j + |u-v|_\infty$ and with lower left corner $(u_1 - K^j / 2, u_2 -K^j / 2 )$. Then $B_u \cup B_v \subseteq B \subseteq \tilde B_u \cap \tilde B_v \subseteq V_{3N}$.

\begin{figure}[h!]
\hspace{3cm}  \includegraphics[width=8cm]{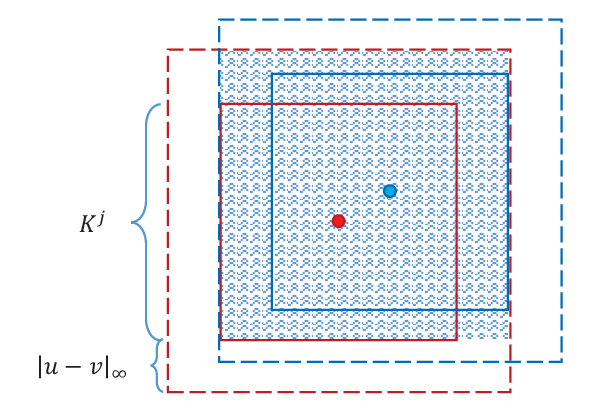}
\\ \vspace{-1cm} \caption{The round points are $u$ and $v$. The boxes with solid lines are $B_u$ and $B_v$. The boxes with dashed lines are $\tilde B_u$ and $\tilde B_v$. The box with shadow is $B$.} \end{figure}

Note that
 \begin{equation} \label{Eq.boundHj2}
H_{K^j}  (u) - H_{K^j} (v) = \big( H^{B_u}  (u) - H^{B} (u) \big) - \big( H^{B_v} (v) - H^B (v) \big) + \big( H^B (u) - H^B (v) \big) .
 \end{equation}
For the first two terms, note
 $$
\E \(   H^{B_u}  (u) - H^{B} (u)  \) ^2 = \E  \eta^{B} (u)^2 - \E \eta^{B_u} (u)^2 \le \E \eta^{\tilde B_u} (u)^2 - \E \eta^{B_u} (u)^2 \le \frac {4 C_3 |u-v|_\infty} {K^j},
 $$
where in the last inequality we have used \eqref{Eq.variance}. One can have the same estimate for the second term, by the same reasoning.

For the third term, let
 $$
\varphi (D)  = \sum_{z \in \partial D}  \big( \P_u (S_\tau = z) - \P_v (S_\tau = z) \big) \big( a(z - u) - a (z - v) \big)  ,
 $$
where $\tau = \tau_{\partial D}$, for $D = B$ or $D = V_{3N}$. By \eqref{Eq.covariance} and the orthogonal decomposition $\eta^{V_{3N}} = \eta^B \bigoplus H^B$, we have
 \begin{equation}\label{Eq.boundHj3}
\E  (H^B (u) - H^B (v))^2 \le |\varphi (V_{3N})| + |\varphi (B)|\,,
 \end{equation}
where we refer to \cite[Lemma 3.10]{BDZ14} for a similar derivation.

Note $u, v \in D^{(\frac 1 {10})}$. Suppose $z \in \partial D$. By \eqref{Eq.Stau}
 $$
| \P_u (S_\tau = z) - \P_v (S_\tau = z) | \le \frac {4 C_3}{|\partial D|}.
 $$
By \eqref{Eq.axax}, we obtain that
 $$
a(z - u) - a (z - v) \le \frac 2 \pi \log \frac {\| z - u \|}{\| z - v \|} + \frac {2 C_0}{K^j} \le 2 \big( 1 + C_0 \big) \frac {|u-v|_\infty}{K^j},
 $$
where we have used that
 $$
\log \frac {\| z - u \|}{\| z - v \|} \le \log \( 1 + \frac {\| u - v \|}{\| z - v \|} \) \le \frac {\| u - v \|}{\| z - v \|} \le \frac {\sqrt 2 |u-v|_\infty}{K^j / 2} \le 3 \frac {|u-v|_\infty}{K^j}.
 $$
By symmetry, $\big| a(z - u) - a (z - v) \big| \le  2 (1 + C_0) \frac {|u-v|_\infty}{K^j}$. It thus follows that
 $$
\big| \varphi (D) \big| \le  8 C_3 (1 + C_0) \frac {|u-v|_\infty}{K^j}  \ \ \mbox{ for $ D = B$ and $D = V_{3N}$}.
 $$
Combined with \eqref{Eq.boundHj3}, this implies that the third term in \eqref{Eq.boundHj2} has variance no more than $16 C_3 (1 + C_0) \frac {|u-v|_\infty}{K^j}$. Therefore, \eqref{Eq.boundHj1} holds.
 \end{proof}

At the end of this section, we state two lemmas, which will be useful in the next section.

 \begin{lemma}\label{Lem.maxinbox}
(Dudley's inequality, \cite[Theorem 4.1]{A90}) Let $B \subseteq \mathbb{Z}^2$ be a box of side length $\ell$ and $\{ G_w : w \in B \}$ be a mean zero Gaussian field satisfying
 $$
\E (G_z - G_w)^2 \le |z-w|_\infty / \ell \ \ \mbox {for all } z,w \in B.
 $$
Then $\E \max_{w \in B} G_w \le C_6$, where $C_6 > 0$ is a universal constant.
 \end{lemma}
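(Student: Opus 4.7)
The plan is to bound $\E\max_{w\in B} G_w$ by a Dudley-type (metric entropy) chaining argument applied to the canonical pseudometric $d(z,w) := \sqrt{\E(G_z - G_w)^2}$ induced by the field. By hypothesis we have the comparison
\[
d(z,w) \le \sqrt{|z-w|_\infty / \ell},
\]
so the $d$-diameter of $B$ is at most $1$ (since $|z-w|_\infty \le \ell$), and the problem is essentially scale-invariant in $\ell$. The universal constant will come from the fact that the Euclidean box $B$ is two-dimensional and the square-root relation between $d$ and $|\cdot|_\infty$ keeps the entropy integral convergent.

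First I would estimate the covering number $N(B, d, \varepsilon)$. Any cube in $|\cdot|_\infty$ of side length $\varepsilon^2 \ell$ has $d$-diameter at most $\varepsilon$, so $B$ can be covered by at most $(\lceil \ell/(\varepsilon^2 \ell)\rceil)^2 \le C \varepsilon^{-4}$ such cubes whenever $\varepsilon \le 1$, and by a single point for $\varepsilon > 1$ (since the diameter is $\le 1$). Hence
\[
\log N(B, d, \varepsilon) \le 4\log(1/\varepsilon) + O(1) \quad \text{for } 0 < \varepsilon \le 1,
\]
and $0$ for $\varepsilon > 1$.

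Next I would invoke Dudley's entropy bound (or equivalently perform a direct chaining along the geometric scales $\varepsilon_n = 2^{-n}$): for a centered Gaussian field indexed by a compact pseudometric space,
\[
\E\max_{w\in B} G_w \le C \int_0^\infty \sqrt{\log N(B, d, \varepsilon)}\, d\varepsilon
\le C \int_0^1 \sqrt{4\log(1/\varepsilon) + O(1)}\, d\varepsilon,
\]
and the right-hand side is a universal finite constant, independent of $\ell$ and of the particular field satisfying the hypothesis. This gives the desired $C_6$.

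No step here is really an obstacle; the only thing to be careful about is that the comparison $d(z,w)^2 \le |z-w|_\infty/\ell$ is dimension-correct so that the entropy integral converges at $0$ (it is the square root that makes the exponent $-4$ rather than $-2$ tolerable under $\sqrt{\log}$). One could equally well use Fernique's majorizing measure theorem or a direct dyadic chaining along boxes of side $\ell 2^{-2n}$ to avoid citing Dudley, but the calculation is identical, and the universality of $C_6$ is immediate from the scale invariance of the estimate.
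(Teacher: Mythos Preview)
Your argument is correct. The paper does not supply its own proof of this lemma but simply quotes it from \cite[Theorem~4.1]{A90}; the Dudley entropy bound you sketch is exactly the standard proof of that cited result, so your approach coincides with what the paper relies on.
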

 \begin{lemma} \label{Lem.concentration}
(Borell--Tsirelson inequality, \cite[Theorem 7.1, Equation (7.4)]{L01}) Let $\{ G_z : z \in B \}$ be a Gaussian field on a finite index set $B$. Set $\s^2 = \max_{z \in B} \Var (G_z)$. Then
 $$
\P \( \left| \max_{z \in B} G_z - \E \max_{z \in B} G_z \right| \ge a \) \le 2 e^{-\frac {a^2}{2 \s^2}}  \ \ \mbox{ for all } a > 0.
 $$
 \end{lemma}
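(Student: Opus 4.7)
The plan is to derive this bound as a direct instance of the classical Gaussian concentration inequality for Lipschitz functions. Since $B$ is finite, a standard diagonalization of the covariance lets us write $G_z = \langle a_z,\xi\rangle$ for some deterministic vectors $a_z \in \R^n$ and an $n$-dimensional standard Gaussian vector $\xi$; by construction $\|a_z\|^2 = \Var(G_z) \le \s^2$. The quantity to control is then $F(\xi) := \max_{z\in B}\langle a_z,\xi\rangle$, viewed as a deterministic function on $\R^n$ evaluated at $\xi$.

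The key geometric input is that $F$ is $\s$-Lipschitz with respect to the Euclidean norm. For any $x,y\in \R^n$, the elementary identity $\max_z f_z - \max_z g_z \le \max_z (f_z - g_z)$ gives
$$F(x) - F(y) \;\le\; \max_{z \in B}\langle a_z, x-y\rangle \;\le\; \max_{z\in B}\|a_z\|\cdot\|x-y\| \;\le\; \s\,\|x-y\|,$$
and the reverse inequality follows by swapping $x$ and $y$. Combined with the general statement that any $L$-Lipschitz $F:\R^n\to\R$ applied to a standard Gaussian $\xi$ satisfies $\P(|F(\xi)-\E F(\xi)|\ge t) \le 2 e^{-t^2/(2L^2)}$, the desired bound follows immediately upon setting $L=\s$ and $t=x$.

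The only non-trivial obstacle is the underlying Lipschitz concentration inequality itself. The cleanest self-contained derivation is the Maurey--Pisier Gaussian interpolation: for independent standard Gaussian vectors $\xi,\xi'$, exploit that $\xi\cos\theta+\xi'\sin\theta$ is again standard Gaussian for every $\theta\in[0,\pi/2]$, differentiate along the interpolation path, and apply the Gaussian integration-by-parts formula $\E[\xi_i h(\xi)]=\E[\partial_i h(\xi)]$ to obtain a sub-Gaussian bound on $\E e^{\lambda(F(\xi)-\E F(\xi))}$; Markov's inequality then closes out the tail. Alternatively one can invoke the Borell--Sudakov--Tsirelson Gaussian isoperimetric inequality and deduce Lipschitz concentration in a single line. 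Since the lemma is entirely classical and used here only as a black box, in a final write-up I would simply cite Ledoux, exactly as the paper does, rather than reproduce either proof in detail.
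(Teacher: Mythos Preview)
Your proposal is correct. The paper itself does not prove this lemma at all; it simply states it with a citation to Ledoux, so your sketch via the Lipschitz representation $F(\xi)=\max_z\langle a_z,\xi\rangle$ and Gaussian concentration is already more than what the paper provides.
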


\section{Hierarchical structure of a path} \label{Sect.paths}

In this section, we will explore a certain hierarchical structure of a path. The whole analysis is purely a geometric and deterministic statement. Later in Section~\ref{Sect.proofs}, a certain multi-scale analysis of the DGFF will be carried out on this hierarchical structure.

In Section~\ref{Sect.sub-paths}, we will consider scales in the form of $K^j$ for all $j \ge 0$. Recall that $K = 2^k$ is large but fixed. We will give the definition of a path in scale $K^j$ (see \eqref{Eq.SLj} below). Then, we will devise a procedure to extract some disjoint sub-paths $P^{(i)}$'s in scale $K^j$ from a path $P$ in scale $K^{j+1}$, which do not necessarily compose a partition of $P$. We call these $P^{(i)}$'s the child-paths of $P$. With this procedure, we can obtain some properties of the child-paths (as stated in the main result Proposition~\ref{Prop.sub-paths} of Section~\ref{Sect.sub-paths}), which are crucial for our goal. Furthermore, we can extract child-paths of $P^{(i)}$'s and so on, and finally obtain a family of sub-paths of $P$. Identifying each sub-path in this family with a node on a tree (where the parent/child relation in the tree is induced by the child-paths under the procedure), we can associate to the path $P$ a tree $\t_P$. With such correspondence, Proposition~\ref{Prop.sub-paths} is translated into Proposition~\ref{Prop.tree}.

In Section~\ref{Sect.Lemma2}, we will prove that $\t_P$ satisfies a certain \emph{regularity} condition if the length exponent of $P$ is close to 1 (see Proposition~\ref{Lem.Lemma2}).

\subsection{Construction of the tree associated to a path} \label{Sect.sub-paths}

We start with a few definitions. Let
 $$
\BD_r = \left\{ \left[ a r  - \frac 1 2 , (a+1) r  - \frac 1 2 \right] \times \left[ b r  - \frac 1 2 , (b+1) r - \frac 1 2 \right] : a, b \in \Z \right\} .
 $$
For technical reasons, we will partition $\R^2$ into boxes whose boundaries can not intersect with $\Z^2$. To this end, we shift the integer boxes by $\frac 1 2$ in each axis. Note that $(B \cap \Z^2)$'s partition $\Z^2$, where $B$ is taken over $\BD_r$.

We will regard a discrete path $P = (z_0, z_1, z_2, \cdots)$ on $\Z^2$ as a continuous path in $\R^2$. Concretely, we identify $P$ as $(z_t, t \ge 0)$ with $z_{n+s} = (1-s) z_n + s z_{n+1}$ for all $n=0,1,2,\ldots$ and $0 \le s \le 1$. To allow us further flexibility, in what follows we allow a path to start or end at a point that lies inside an edge.

Denote by $x_P$ and $y_P$ the starting point and the ending point of a path $P$, respectively. Let $\| P \| : = \| x_P - y_P \|$, similar to discrete paths. Set $|P| : = |P \cap \Z^2|$, which is the cardinality of integer vertices on $P$. We say that two paths $P$ and $Q$ are disjoint if $P \setminus \{ x_P, y_P \}$ and $Q \setminus \{ x_Q, y_Q \}$ are disjoint.

\medskip

\noindent{\bf 1. Paths in scale $K^j$.}

Recall that $B(x, \ell)$ is the $\ell_2$-ball centered at $x$ and of radius $\ell$. Let
 $$
\SL_0 = \Z^2,
 $$
\begin{equation} \label{Eq.SLj}
\SL_j := \left\{ P : \  1 \le \frac 1 {K^j} \|P\| \le 1 + \frac 1 K , \ P \subseteq B \big( x_P, \| P \| \big) \right\} \ \  \mbox{ for all } j \ge 1.
\end{equation}

The property $P \subseteq B(x_P, \| P \|)$ is crucial in the proofs later, and we will show that the extracted child-paths satisfy it (see Proposition~\ref{Prop.sub-paths} (d) below). In the definition of $\SL_0$ and what follows, we identify a vertex $z$ as $\{ z \}$, and regard it as a path in the smallest scale. We say that $P$ is {\em a path in scale $K^j$} if $P \in \SL_j$, in which case $\| P \|$ is comparable to $K^j$ (for $j \ge 1$) .

For $j \ge 0$ and each $P \in \SL_{j+1}$, let
 $$
E (P) := \left\{ z \in \R^2 : \| x_P -z \| + \| y_P -z \| \le \( 1 + \frac 2 {K^2} \) \| P \|  \right\} ,
 $$
which is an ellipse of width $O(K^{j+1})$ and height $O(K^j)$. Define
 \begin{equation} \label{Eq.tildeExy}
\tilde E (P) = \left\{ z \in \R^2 : d \big( z, E(P) \big) \le 4 K^j  \right\} .
 \end{equation}
We say that $P$ is \emph{tame} if $P \subseteq \tilde E(P)$, and {\em untamed} otherwise. We would like to mention that we do not say a path in $\SL_0$ is tame or untamed.

Rescale $\R^2$ by regarding $K^j$ as unity, and consider the paths in scale $K^{j+1}$. The tame property implies that $P$ is roughly a straight segment of length $O(K)$, while the untamed property implies that $P$ looks like a curve. What is important to us is that a curve is longer than a straight segment with the same endpoints. Thus, we can extract more/longer child-paths from untamed path, as Property (b2) in Proposition \ref{Prop.sub-paths} states. In other words, an untamed sub-path will bring a large amount of offsprings in the whole family. Therefore, if a path in $\p_\k$ has length exponent close to 1, the family of its sub-paths can not contain many untamed ones, which will be proved in Proposition~\ref{Lem.Lemma2}.

We call Property (c) in Proposition \ref{Prop.sub-paths} the 12-times-rule (here the number 12 is a somewhat arbitrary choice). With it, the child-paths are not dense in some sense, so that we can take advantage of \eqref{Eq.Xindpt} and employ a large deviation estimate for $h_j |_{P^{(i)}}$'s later. Recall $[r] = \{ 1, \ldots, r \}$.

 \begin{prop} \label{Prop.sub-paths}
Suppose that $j \in [m-2]$ and $P \in \SL_{j+1}$. Then, there exists $\ell \in [K^j, (1 + \frac 1 K ) K^j]$, a positive integer $d$, and disjoint sub-paths $P^{(i)}$ of $P$ for $i \in [d]$ such that the following hold.

(a) $d \ge K$.

(b1) $\ell \ge \frac 1 d \| P \|$.

(b2) $\ell \ge (1 + \frac 1 {K^2})  \frac 1 d  \| P \|$ if $P$ is untamed.

(c) Each box in $\BD_{K^j}$ is visited by at most 12 sub-paths of the form $P^{(i)}$, $i \in [d]$.

(d) $P^{(i)} \in \SL_j$, and $\| P^{(i)} \| = \ell$ for each $i\in [d]$.
 \end{prop}
 \begin{proof}
In the following, let $x, y$ be respectively the starting and ending points of $P$, and let $x_i, y_i$ be those of $P^{(i)}$. For $z\in P$, let $P_z$ be the sub-path of $P$ starting at $z$, and let $P_{z, \ell}$ be the sub-path of $P$ starting at $z$ until $P$ reaches $\partial B(z, \ell)$ for the first time. We will set $\ell$, pick $x_i$'s, and define $y_i$ as the point where $P_{x_i}$ first reaches $\partial B(x_i, \ell)$, i.e. $P^{(i)} = P_{x_i, \ell}$. Then, Property (d) will be satisfied naturally, and only Properties (a), (b1), (b2) and (c) are to be checked. Next, we will describe the procedure and verify our construction respectively in the tame and untamed cases.

\noindent {\bf Suppose that $P$ is tame.} Set $d : = K$ and $\ell : = \frac 1 K \| P \|$. Let $L(x,y) $ be the straight segment connecting $x$ and $y$, i.e., $L(x,y) = \{ (1-s) x + s y: 0 \le s \le 1 \}$. Denote by $z_i$ the point in $L(x,y)$ with $\| z_i - x \| = i \ell $, and by $L_i$ the line perpendicular to $L(x,y)$ at $z_i$.  Then we take $x_i$ to be the point where $P$ hits $L_{i-1}$ for the first time, for $i \in [d]$, with $x_1 = x$. It is not hard to check that $P^{(i)}$'s are disjoint, and Properties (a), (b1), (c) all hold.

 \begin{figure}[h]
  \includegraphics[width=17cm]{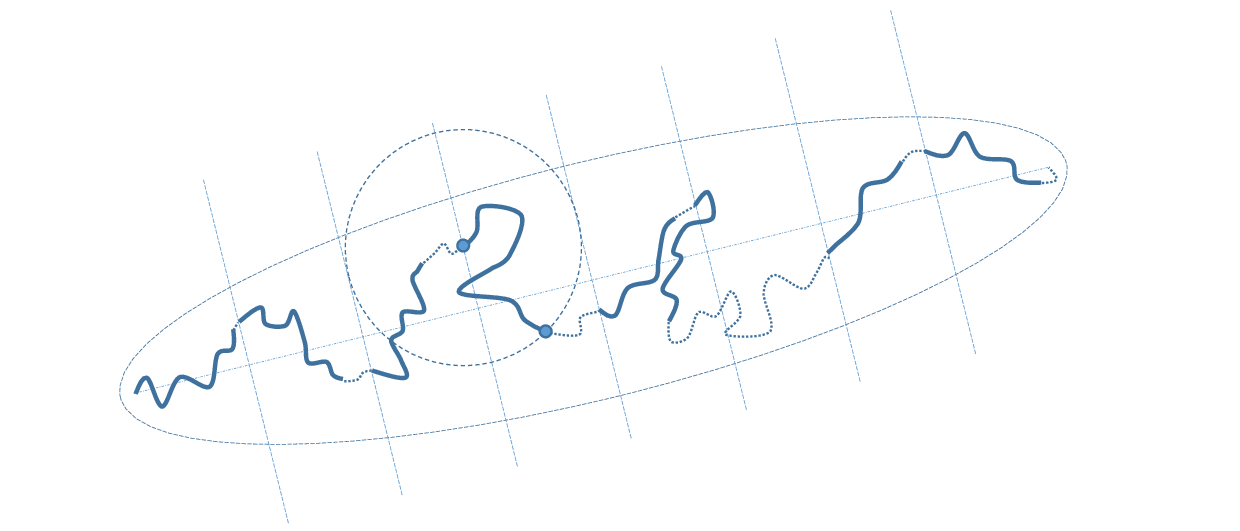}
\\ \vspace{-1cm} \caption{The tame case. $K=8$. The solid lines are $P^{(i)}$'s. The dotted lines will be deleted. The circle along the dashed line has radius $\ell$, centered at $x_4$.} \end{figure}

\noindent {\bf Suppose that $P$ is untamed.}  Set
 $$
\hat d : = \left\{ \begin{array}{ll} K, & \mbox{if }   \| P \| \le \frac {1 + 1 / K}{1 +  1 / K^2 } K^{j+1} ,  \\ K+1, & \mbox{otherwise,} \end{array} \right.  \ \ \ \mbox{and} \ \ \
\ell : = \frac 1 {\hat d} \( 1 + \frac 1 {K^2} \) \| P \|.
 $$
Since $P \in \SL_{j+1}$, we always have $\ell \in \left[ K^j, K^j \big( 1 + \frac 1 K \big) \right] $.

We set $x_1 = x$. The key is to describe the choice of $x_{i+1}$ for $i\geq 1$ recursively. Suppose that we have defined $x_i$ and $y_i$. To obtain Property (c),  let $\BD^i$ be the collection of boxes $B \in \BD_{K^j}$ such that $B$ has been visited by 11 sub-paths in $P^{(1)}, \cdots, P^{(i-1)}$, and it is also visited by $P^{(i)}$ (note that $\BD^i$ could be empty). If $\BD^i = \emptyset$, we define $x_{i+1} = y_i$. If $\BD^i \neq \emptyset$ and $y \notin B^i : =  \cup_{B \in \BD^i} B$, let $z_i$ be the point where $P$ exits $B^i$ eventually, i.e.
 $$
z_i \in B^i \ \ \ \mbox{and} \  \ \ P_{z_i} \setminus \{ z_i \} \subseteq (B^i)^c.
 $$
In fact, $z_i \in \partial B^i$ since $P$ is a continuous path in $\R^2$. If $y_i$ appears after $z_i$ along $P$, i.e. $z_i \in P^{(i)}$, we define $x_{i+1} = y_i$. Otherwise, we define $x_{i+1} = z_i$, thus the part in $P$ from $y_i$ to $z_i$ (except the endpoints) will be removed and not be contained in any sub-path. In words, we define $x_{i+1}$ as the later one of $y_i$ and $z_i$ along $P$. This will imply Property (c) and that sub-paths are disjoint. By Property (d), $P^{(i)} \subseteq B(y_i, 2 \ell)$. Note that the diameter of a box in $\BD_{K^j}$ is $\sqrt 2 K^j \le 2 \ell$, and $x_{i+1} = y_i$ or $z_i \in B^i$. By the definition of $\BD^i$,
 \begin{equation} \label{Eq.distofxyz}
\| w - y_i \| \le 4 \ell \ \mbox{ for all }w \in B^i, \ \ \ \mbox{and} \ \ \ \| x_{i+1} - y_i \| \le 4 \ell.
 \end{equation}
If $\BD^i \neq \emptyset$ and $y \in B^i$, we do not define $x_{i+1}$ and stop the procedure. To summarize, we have
 \begin{equation} \label{Eq.xi3case}
\left\{ \begin{array}{ll}  x_{i+1} : = y_i & \mbox{in Case 1, } \BD^i  = \emptyset, \\ x_{i+1} : = \mbox{ the later one of } y_i \mbox{ and } z_i & \mbox{in Case 2, } \BD^i \neq \emptyset \mbox{ and } y \notin B^i, \\ x_{i+1} \mbox{ is not defined} & \mbox{in Case 3, } y \in B^i.  \end{array} \right.
 \end{equation}
In Case 1 and 2, $y_{i+1}$ can not be defined if $P_{x_{i+1}}$ is contained in the interior of $B(x_{i+1}, \ell)$. Let
 $$
d : = \min \{ i : x_{i+1}  \mbox{ or } y_{i+1} \mbox{ is not well-defined} \}.
 $$
That is, the procedure stops at $i = d$ naturally, when it can not continue. By our construction, the sub-paths are disjoint, and any box $B \in \BD_{K^j}$ can be visited by at most 12 sub-paths, verifying Property (c). Thus, it remains to check Properties (a) and (b2).

 \begin{figure}[h]
  \includegraphics[width=17cm]{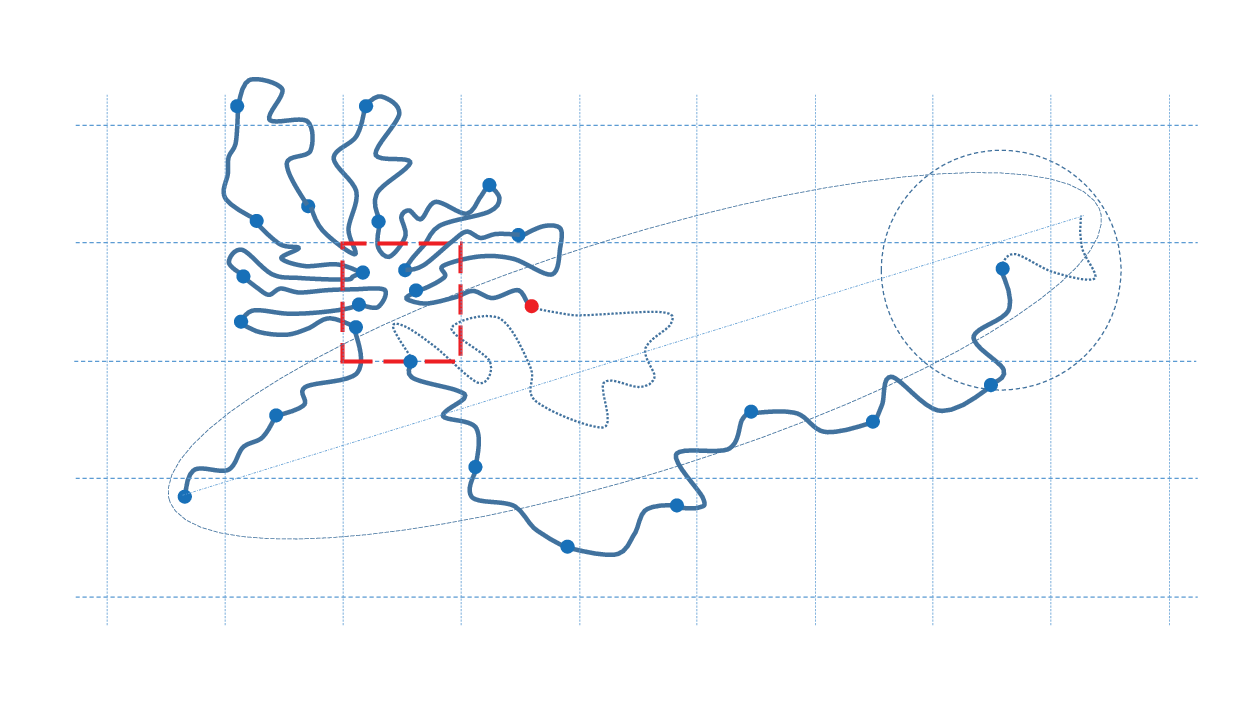}
\\ \vspace{-1.5cm} \caption{The untamed case. $K=8$. The solid lines are sub-paths, with endpoints being marked with round dots. The 12-times-rule is applied to the red box $B \in \BD_{K^j}$, so that the dotted line from the red round dot will be deleted. } \end{figure}

Next, we will prove $d \ge \hat d$ in three steps. Assuming this, we will have Property (a) and Property (b2) by the choice of $\hat d$ and $\ell$, completing the proof.

\medskip

\noindent {\bf Step 1}.  We will show that the sequence of sub-paths spreads out regularly in the sense that
 \begin{equation} \label{Eq.sub-pathcont}
\| z - x \| \le i \ell \ \mbox{ for all } z \in P^{(i)} \mbox{ and } i \in [d].
 \end{equation}
We will check it by induction on $i$. Note that \eqref{Eq.sub-pathcont} holds for $i=1$ by the definition of $P^{(1)}$. Suppose that it holds for all $i^\prime \in [i]$, then we need to show that it also holds for $i+1 (\le d)$. By Property (d) and the triangle inequality, we only need to check that $\| x_{i+1} - x \| \le i \ell$. If $x_{i+1} = y_i$, it holds by the induction hypothesis and $y_i \in P^{(i)}$. Otherwise, Case 2 in \eqref{Eq.xi3case} is true, and $x_{i+1} = z_i$, which lies in some box $B \in \BD^i$. By the definition of $\BD^i$, one can find $i^\prime \le i-11$ and a point $w \in B \cap P^{(i^\prime)}$ such that $\| x_{i+1} - w \| \le \sqrt 2 K^j \le 2 \ell$. Combined with the induction hypothesis, this yields that $\| x_{i+1} - x \| \le \| x_{i+1} - w \| + \| w - x \| \le  2 \ell + i^\prime \ell \le i \ell$.

\medskip

\noindent {\bf Step 2}. We will show that
 \begin{equation} \label{Eq.distydy}
\| y_d - y \| \le 5 \ell .
 \end{equation}

 \begin{itemize}

\item Suppose that $x_{d+1}$ is well-defined. Then, $\| x_{d+1} - y_d \| \le 4 \ell$ by \eqref{Eq.distofxyz} and \eqref{Eq.xi3case}. In this case, $\| x_{d+1} - y \| < \ell$ since $y_{d+1}$ can not be well-defined. By the triangle inequality, $\| y_d - y \| \le 4 \ell + \ell = 5 \ell$.

\item Suppose that $x_{d+1}$ is not well-defined. Then, Case 3 in \eqref{Eq.xi3case} is true, i.e. $y \in B^d$. Then, \eqref{Eq.distydy} follows from \eqref{Eq.distofxyz}.

\end{itemize}

\medskip

\noindent {\bf Step 3}. We will show $d \ge \hat d$, using the untamed property and results in the previous two steps, respectively in the scenarios whether the 12-times-rule has been invoked or not.

{\bf Suppose that some box in $\BD_{K^j}$ is visited by 12 sub-paths.} Then, we claim that
 \begin{equation} \label{Eq.distofxy}
\| x-y \| \le (d-2) \ell.
 \end{equation}
Assuming \eqref{Eq.distofxy}, we will conclude that $d \ge 2 + \frac {\| x-y \|}{\ell} \ge K + 1 \ge \hat d$, where we have used $\frac {\| x-y \|} \ell = \frac {\hat d }{ 1 + 1 / K^2 } \ge \frac K {1 + 1 / K^2 } > K - 1$ by the choices of $\hat d$ and $\ell$. Next, we verify \eqref{Eq.distofxy}. Let $i$ satisfy $B^i \neq \emptyset$ and $B^{i^\prime} = \emptyset$ for all $i^\prime > i$. We pick a box $B \in \BD^i$, and let $P^{(i_0)}$ be the first sub-path visiting $B$. Then, $i_0 \le i - 11$. Pick $w \in B \cap P^{(i_0)}$, we have $\| x-w \| \le i_0 \ell \le (i - 11) \ell $ by \eqref{Eq.sub-pathcont}.
 \begin{itemize}

\item Suppose $i=d$. Combining the preceding display with \eqref{Eq.distydy} and \eqref{Eq.distofxyz}, we have $\| x-y \| \le \| x - w \| + \| w-y_d \| + \| y_d - y\| \le (d-2) \ell$.

\item Suppose $i < d$. Let $B$ be the box containing $z_i$. Then, $\| w - x_{i+1} \| \le \max \left\{ 4 \ell, \sqrt 2 K^j \right\} = 4 \ell$, where we have used \eqref{Eq.distofxyz} and the fact that $x_{i+1} = y_i$ or $z_i$. By the choice of $i$, $\| x_{i+1} - y_d \| \le (d - i) \ell$. Combining the above inequalities, we have that
$$\| x-y \| \le \| x-w \| + \| w - x_{i+1}\| + \| x_{i+1} - y_d \| + \| y_d - y \| \le (i-11) \ell + 4 \ell + (d-i) \ell + 5 \ell = (d-2) \ell\,,$$ where we have used \eqref{Eq.distydy}.
 \end{itemize}

{\bf Suppose that none of the boxes in $\BD_{K^j}$ is visited by 12 sub-paths.} Then, we have not removed any part of $P$ in the procedure. By the definition of $d$, we conclude $\| y - y_d \| < \ell$ and $P_{y_d} \subseteq B (y, 2 \ell)$. It follows that every vertex in $P \setminus B(y, 2 \ell)$ must lie in some sub-path. Note that $B(y, 2 \ell) \subseteq \tilde E(P)$ for $2 \ell \le 4 K^j$. Since $P$ is untamed, there exist $w \notin \tilde E (P)$ and $i \le d-1$ such that $w \in P^{(i+1)}$. By Property (d) and $\ell < 4 K^j$, we have that $x_{i+1} \notin E (P)$, i.e., $\| x - z \| + \| z - y \| \ge (1 + \frac 2 {K^2}) \| P \| > \hat d \ell$, where $z = x_{i+1} = y_i$. Since $x_{i^\prime+1} = y_{i^\prime}$ for all $i^\prime$, we have $i \ell \ge \| x- y_i \| = \| x-z \|$ and $(d-i) \ell \ge \| x_{i+1} - y_d \| = \| z - y_d \|$ by Property (d) and the triangle inequality. Combining the inequalities above, we conclude that $ (d+1) \ell = i \ell + (d-i) \ell + \ell \ge \| x-z \| + \| z- y_d \| + \| y_d - y \| \ge \| x-z \| + \| z - y\| > \hat d \ell$. Therefore, $d+1 > \hat d$, i.e. $d \ge \hat d$.
 \end{proof}

For $j = 0$ and $P \in \SL_{j+1} = \SL_1$, we regard each $\{ z \}$ as a sub-path of $P$, where $z \in (\Z^2 \cap P) \setminus \{ x_P , y_P  \}$. We remove the endpoints to match the definition that paths are disjoint. Note $K \le \| P \| \le K + 1$ for $P \in \SL_1$. It follows that $|P| \ge K - 2$. Consequently, $P$ has at least $K-4 \ge \frac 1 2 \| P \|$ sub-paths.
\begin{cor}\label{Cor.SLm}
Suppose $j = 0$ and $P \in \SL_1$. Then, we can find $d \ge \frac 1 2 \| P \|$ different sub-paths $P^{(i)}$, $i \in [d]$ in the interior of $P$.
 \end{cor}

\medskip

\noindent{\bf 2. \bf Constructing the tree $\t_P$.}

For each $j \ge 0$, we will associate every path $P \in \SL_j$ with a tree $\t_P$ of depth $j$. We will first introduce the construction of $\t_P$, then state its properties in Proposition~\ref{Prop.tree} below (which is merely a translation of Proposition~\ref{Prop.sub-paths} and Corollary~\ref{Cor.SLm} in the language of trees). In what follows, we denote by $d_P$ the number $d$ in Proposition~\ref{Prop.sub-paths} and Corollary~\ref{Cor.SLm} for clearer dependence on the path $P$. Recall that we call the sub-paths $P^{(i)}$, $i \in [d]$ in Proposition~\ref{Prop.sub-paths} and Corollary~\ref{Cor.SLm} child-paths of $P$.

We will construct the trees by induction on $j$. For $j = 1$, we identify $P$ as the root $\rho$, and each child-path constructed in Corollary~\ref{Cor.SLm} as a child of $\rho$. In this case, each child is a leaf and is identified as a vertex in $\Z^2 \cap \big( P \setminus \{ x_P, y_P \} \big)$. For $j \ge 2$, suppose that $P \in \SL_j$, and that we have associated every path $Q \in \SL_{j-1}$ with a tree $\t_Q$ of depth $j-1$. By Proposition~\ref{Prop.sub-paths}, we extract $d = d_P$ disjoint child-paths $P^{(1)}, \cdots, P^{(d)} \in \SL_{j-1}$ from $P$. Then, we associate the root $\rho$ with $d$ children $u_1, \cdots, u_d$. In addition, we identify $P$ as $\rho$, and identify $P^{(i)}$ as $u_i$ for all $i \in [d]$. By attaching the root of $\t_{P^{(i)}}$ to the node $u_i$, we obtain the tree $\t_P$.

So far, we have associated a path $P \in \SL_j$ with a tree $\t_P$ of depth $j$. Denote the root by $\rho$, and the set of leaves by $\l$. Each node $u$ is identified with a sub-path of $P$, which is denoted by $P^u$. Let $L(u)$ be the level of a node $u$, with $L(\rho) = 0$ and $L(v) = j$ for $v\in \l$ (thus we have $P^u \in \SL_{j - L(u)})$. Let $\| u \| : = \| P^u \|$ if $L(u) \le j-1$. Let $d_u = d_{P^u}$, which is the number of children of $u$. We say that $u$ is tame/untamed if so is $P^u$. We would like to mention that different leaves correspond to different vertices, since child-paths are disjoint and we have removed endpoints of paths in $\SL_1$ in constructing leaves (see Corollary~\ref{Cor.SLm}).

 \begin{prop} \label{Prop.tree}
Suppose that $P \in \SL_j$, $j \in [m-1]$. Then, the tree $\t_P$ associated with $P$ constructed above satisfies the following properties.

(a1) $d_u \ge K$ if $L(u) \le j-2$.

(a2) $d_u \ge \frac 1 2 \| u \|$ if $L(u) = j - 1$.

(b1) $\| v \| \ge \frac 1 {d_u} \| u \|$ if $L(u) \le j-2$, where $v$ is a child of $u$.

(b2) Furthermore, $\| v \| \ge  (1 + \frac 1 {K^2}) \frac 1 {d_u} \| u \|$ if $u$ is untamed.

(c) $|\l| \le |P|$.
 \end{prop}

\medskip

\noindent{\bf 3. Paths in $\p_k$.}

We will investigate paths in $\p_\k$, where we recall $\p_\k = \{ P : \| P \| \ge \k N \}$. Let
 \begin{equation} \label{Eq.d0}
d_0 :=  \lfloor \frac {\k N}{K^{m-1}} \rfloor \ge K,
 \end{equation}
where we recall that $\k N \ge K^{m+1}$, and $\lfloor a \rfloor$ is the largest integer which is less than or equal to $a$. We follow the procedure to extract child-paths from a tame path in the proof of Proposition~\ref{Prop.sub-paths}, with $\ell : = K^{m-1}$ in the scenario here. Consequently, we can extract $d_0$ sub-paths $P^{(i)} \in \SL_{m-1}$, $i \in [d_0]$ from $P$, with $P_{y_{d_0}}$ being removed completely. Then, we obtain Corollary~\ref{Cor.Pkpath} below. Furthermore, let the root $\rho$ have $d_0$ children and attach the roots of $\t_{P^{(i)}}$'s to them, and then we obtain the tree $\t_P$. Following Proposition~\ref{Prop.tree}, we have Corollary~\ref{Cor.Pktree} below.

 \begin{cor} \label{Cor.Pkpath}
Suppose $P \in \p_\k$. We can extract $d_0$ disjoint sub-paths (of $P$) in $\SL_{m-1}$ such that (c) in Proposition~\ref{Prop.sub-paths} holds (with $j = m-1$).
\end{cor}

 \begin{cor} \label{Cor.Pktree}
Suppose $P \in \p_\k$. Set $j=m$. Then, $d_\rho = d_0$, (a1), (a2), (c) in Proposition~\ref{Prop.tree} hold, and (b1), (b2) in Proposition~\ref{Prop.tree} hold for $u \neq \rho$.
 \end{cor}

\subsection{The total flow through untamed nodes}  \label{Sect.Lemma2}

Recall that every $P \in \p_\k$ is associated with a tree $\t_P$ of depth $m$. Let $\theta_P$ be {\em the unit uniform flow} on $\t_P$ from $\rho$ to $\l$, with $\theta_P (\rho) = 1$ and $\theta_P (v) = \frac 1 {d_u} \theta_P (u)$ if $v$ is a child of $u$. Let
 $$
\p_{\k, \d, K} : = \left\{ P :  P \mbox{ is a path in } V_N, \ \| P \| \ge \k N \mbox{ and } |P| \le N^{1 + \frac \d {K^2 k}} \right\} .
 $$
In this section, we will show the following proposition.
 \begin{prop} \label{Lem.Lemma2}
For each $P \in \p_{\k, \d, K}$,
 $$
\sum_{u: 1 \le L(u) \le m-1} \theta_P (u) 1_{\{ u \mbox{ is untamed} \} } \le 2 \d m .
 $$
 \end{prop}
 \begin{proof}
Note $\p_{k, \d, K} \subseteq \p_\k$. For every leaf $v$, denote nodes on the ray in $\t_P$ from $\rho$ to $v$ by $v_0 (= \rho), v_1, \cdots, v_{m-1, }v_m (= v)$ in order. Let $C_v =  |\{ 1 \le r \le m-1: v_r \mbox{ is untamed} \} |$. Then, by Corollary~\ref{Cor.Pktree} and (a2), (b1), (b2) in Proposition~\ref{Prop.tree},
 $$
\prod_{r=1}^{m-2} \frac { \| v_{r+1} \| }{\| v_r \|} \ge  \( 1 + \frac 1 {K^2} \) ^{C_v - 1} \times \prod_{r=1}^{m-2} \frac 1 {d_{v_r} } \ \ \ \ \ \mbox{ and } \ \ \ \ \ \frac 1 {\| v_{m-1} \|} \ge \frac 1 {2 d_{v_{m-1}} } .
 $$
Note that $\| v_1 \| \ge K^{m-1} $, since $P^{v_1} \in \SL_{m-1}$. It follows that
 $$
\frac 1 {K^{m-1}} \ge \frac 1  {\| v_1 \|} = \( \prod_{r=1}^{m-2} \frac {\| v_{r+1} \|}{\| v_r \|} \) \frac {1}{\| v_{m-1} \|} \ge  \frac 1 2 \( 1 + \frac 1 {K^2} \) ^{C_v - 1}  \times \prod_{r=1}^{m-1} \frac 1 {d_{v_r} }  .
 $$
Note that $d_{v_0} \ge K$ by \eqref{Eq.d0}. By definition, $\theta_P (v) \le \frac 1 K \prod_{r=1}^{m-1} \frac 1 {d_{v_r} }$. Consequently,
 \begin{equation} \label{Eq.thetav}
\theta_P (v) \le  \frac 2 {K^m}  \( \frac {K^2} {K^2 + 1} \) ^{C_v - 1} \le \frac 4 {K^m} e^{ - \frac 1 {K^2 + 1} C_v }.
 \end{equation}

It follows that
 \begin{eqnarray*}
  & &
\sum_{u: 1 \le L(u) \le m-1} \theta_P (u) 1_{ \{ u \mbox{ is untamed} \} }
 \\ & = &
\sum_{u: 1 \le L(u) \le m-1} \( \sum_{v \in \l}  \theta_P (v) 1_{ \{ u \mbox{ is an ancestor of } v \} } \) 1_{ \{ u \mbox{ is untamed} \} }
 \\ & = &
\sum_{v \in \l } \theta_P (v) C_v = \sum_{v \in \l} \theta_P (v) C_v 1_{\{C_v \ge \d m \}} + \sum_{v \in \l } \theta_P (v) C_v 1_{\{ C_v < \d m \}}
 \\ & \le &
\frac 4 {K^m}  e^{ - \frac 1 {K^2 + 1} \d m }   |\l |  m    + \d m ,
 \end{eqnarray*}
where in the last inequality, we have used $C_v \le m$ and \eqref{Eq.thetav} in the case of $C_v \ge \d m$, and have used $\sum_{v \in \l } \theta_P (v) = 1$ in the case of $C_v < \d m$.

Finally, we will show that $\frac 4 {K^m}  e^{ - \frac 1 {K^2 + 1} \d m }   |\l | \le \d$, which will then complete the proof of the proposition. By (c) in Proposition~\ref{Prop.tree} and the assumption $P \in \p_{\k, \d, K}$, and recalling $\k N < K^{m+2}$, we have that $| \l | \le |P| \le N^{1 + \frac {\d}{ K^2 k}} \le \( \frac {K^{m+2}}{\k} \) ^{ 1 + \frac {\d} {K^2 k}}$. Therefore,
 $$
\frac 4 {K^m} e^{ - \frac 1 {K^2 + 1} \d m  }  |\l | \le 4 \( \frac {K^2}{\k} \) ^{ 1 + \frac {\d} {K^2 k}}  \( K^{ \frac  1 {K^2 k} } e^{- \frac 1 {K^2 + 1}}  \) ^{\d m}  \le \d ,
 $$
since $ K^{\frac 1 {K^2 k}} e^{ - \frac 1 {K^2+1} } < 1$ and $m \to \infty$.
 \end{proof}

\section{Multi-scale analysis on the hierarchical structure of the path} \label{Sect.proofs}

In this section, we will prove Theorem~\ref{mainthm}. To this end, we say that a path $P$ in $\SL_j$ is {\em open} if $\eta_j (z) \ge \e k$ for some $z \in P \cap \Z^2$, where $\e > 0$ is to be selected and $\eta_j$ is defined in \eqref{Eq.defetaj} above. We say that a node $u$ is open if so is $P^u$. We will show that with high probability, each tame node has a small fraction of open children, which is Proposition~\ref{Prop.Lemma1}. Combing this with Proposition~\ref{Lem.Lemma2}, we will show in Proposition~\ref{Lem.Lemma3} that open nodes in the tree are rare. In Section~\ref{Sect.Pfmain}, we will show that many leaves have a small number of open ancestors. For such a leaf $z$, $\eta_j (z) \le \e k$ for most $j$. Then, we will show that $\eta^{V_{3N}} (z) \le 15 \sqrt \d \log N$, by setting $\e = \frac 1 2 \sqrt{\d}$. As a result, Theorem~\ref{mainthm} will follow by symmetry.

\subsection{The fraction of open child-paths of a tame path} \label{Sect.Lemma1}

Let us begin with some definitions. Suppose that $P \in \SL_j$. Let  $P^{(i)}$, $i \in [d_P]$ be the child-paths constructed in Proposition~\ref{Prop.sub-paths} or Corollary~\ref{Cor.SLm} (depending on the value of $j$). Let
 $$
\D_P = \frac 1 {d_P} \left| \left\{ i \in [d_P] : P^{(i)} \mbox{ is open} \right\} \right| .
 $$
We would like to mention that for each $P \in \SL_j$, $\D_P$ relies on the field $\eta_{j-1}$ since $P^{(i)} \in \SL_{j-1}$ for all $i \in [d_P]$. In what follows, we will often deal with tame paths $P$'s with fixed endpoints, so that $\D_P$'s rely on the Gaussian field in a local region. Concretely, we will regard each box in $\BD_{K^{j-2}}$ as an end-box of paths in $\SL_j$ so that we can classify $\SL_j$ via end-boxes. The main goal of this section is to show that $\D_P$'s for tame paths $P$'s with endpoints lying in a fixed pair of end-boxes are uniformly small (see Proposition~\ref{Prop.Lemma1}). Later, we will use \eqref{Eq.Xindpt} and employ a large deviation estimate to prove Proposition~\ref{Lem.Lemma3}, via counting possible pairs of end-boxes. Define
 $$
\p_j (B_1, B_2) := \big\{ P \in \SL_j : x_P  \in B_1  \mbox{ and }   y_P \in B_2 \big\} , \ \ T_j (B_1, B_2) : = \big\{ P \in \p_j (B_1, B_2) : P \mbox{ is tame} \big\} ,
 $$
 $$
\mbox{and } \ \END_j : = \big\{ (B_1, B_2): B_i \in \BD_{K^{j-2}} \mbox{ and } B_i \cap V_N \neq \emptyset \mbox{ for } i = 1,2, \mbox{ and } \p_j (B_1, B_2) \neq \emptyset \big\} .
 $$

 \begin{prop} \label{Prop.Lemma1}
There exist universal constants $C_7, C_8 > 0$ and a large $K_1 (\e) > 0$ such that the following holds for all $K \ge K_1 (\e)$. For each $j \in [m-1]$ and each $(B_1, B_2) \in \END_j$,
 $$
\P \(  \D_P \ge \d \ \mbox{ for some } P \in T_j (B_1, B_2) \) \le K^2 e^{- C_7 \e^2 k^2 \d^2 } \ \ \ \mbox{ for all }  \d \ge  \frac { C_8 }{\sqrt k \e}.
 $$
 \end{prop}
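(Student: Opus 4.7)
The plan is a two-step reduction. First, a geometric step that eliminates the dependence on $P$: fixing $(B_1, B_2) \in \END_j$, I will argue that every tame $P \in T_j(B_1, B_2)$ has all its extracted subpaths $P^{(i)} \in \SL_{j-1}$ confined to fixed boxes $R_i \subset \R^2$ of side $O(K^{j-1})$, depending only on $(B_1, B_2)$ and not on $P$. This uses that the tame construction in Proposition~\ref{Prop.subpaths} cuts $P$ by perpendicular lines $L_{i-1}$ to $L(x_P, y_P)$: as $(x_P, y_P)$ varies over $B_1 \times B_2$ of side $K^{j-2}$, the angle of $L(x_P, y_P)$ varies by $O(K^{-2})$ and its offset by $O(K^{j-2})$, so each $L_{i-1}$ shifts by at most $O(K^{j-2})$, and since $P^{(i)} \subset B(x_{P^{(i)}}, (1+1/K) K^{j-1})$ by Property (d), each $P^{(i)}$ fits into a fixed box $R_i$ of side $O(K^{j-1})$. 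Setting $M_i := \max_{z \in R_i \cap \Z^2} \eta_{j-1}(z)$, the event that some tame $P$ has $\D_P \ge \d$ is contained in $\{ \#\{i : M_i \ge \e k\} \ge \d K \}$, which no longer depends on $P$.

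Second, a Gaussian concentration step: I will control the latter event through the statistic $\Psi := \sum_{i=1}^K (M_i - \e k /2)_+$, which exceeds $\d K \e k / 2$ on this event. The crucial observation is that $\Psi$ is itself a maximum of a centered Gaussian process: adjoining to each $R_i$ a fictitious vertex $\ast_i$ carrying the deterministic value $\e k /2$, one has
$$\Psi = \max_{\mathbf{z} \in \prod_i (R_i \cup \{\ast_i\})} \sum_{i : z_i \ne \ast_i} \bigl( \eta_{j-1}(z_i) - \e k /2 \bigr),$$
so Lemma~\ref{Lem.concentration} applies with variance parameter $\sigma_\Psi^2 = \sup_{\mathbf{z}} \Var\bigl( \sum_{i : z_i \ne \ast_i} \eta_{j-1}(z_i) \bigr)$.

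The heart of the proof is the estimate $\sigma_\Psi^2 = O(K^2)$. Using the hierarchical decomposition \eqref{Eq.defetaj}--\eqref{Eq.Xindpt}, the covariance $\Cov(\eta_{j-1}(z_i), \eta_{j-1}(z_{i'}))$ for $z_i \in R_i, z_{i'} \in R_{i'}$ at separation $\asymp |i-i'| K^{j-1}$ is bounded by $(C_1+2C_2)(k - \log_2 |i-i'|)$, since only the scales $X_r$ with $r \ge \log_2|i-i'| + (j-1)k$ fail to be independent across the two sites. Summing,
$$\sigma_\Psi^2 \le K(C_1 k + 2C_2) + 2(C_1+2C_2) \sum_{d=1}^{K-1}(K-d)(k - \log_2 d),$$
and a direct calculation shows the inner sum is only $O(K^2)$: the naive $\Theta(K^2 k)$ contribution is exactly cancelled by $\sum_{d=1}^{K-1}(K-d) \log_2 d = K^2 k/2 + O(K^2)$. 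Combined with $\E M_i = O(1)$ from Lemma~\ref{Lem.maxinbox} applied to the rescaled difference field (whose increments are controlled by Lemma~\ref{Lem.Harmdiff}) and a standard Gaussian tail integral giving $\E \Psi \le \d K \e k /4$ once $\d \ge C_8 / (\sqrt k \, \e)$ with $C_8$ large enough, Lemma~\ref{Lem.concentration} delivers $\P(\Psi \ge \d K \e k /2) \le 2 \exp(-c\, \e^2 k^2 \d^2)$, which after absorbing the constant into the $K^2$ prefactor gives the proposition.

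The main obstacle is the sharp variance bound $\sigma_\Psi^2 = O(K^2)$: the naive bound $O(K^2 k)$ (obtained from $\Var(\eta_{j-1}) = O(k)$ per site and $|\Cov| \le \Var$) would yield only the exponent $\e^2 k \d^2$, too weak to survive the $K^{O(m)}$-sized union bound over trees in the subsequent multi-scale argument of Section~\ref{Sect.Lemma3}. The logarithmic cancellation, a signature of the log-correlated structure of the DGFF, is therefore essential to the whole scheme rather than a mere refinement.
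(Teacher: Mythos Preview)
Your route is genuinely different from the paper's. The paper proves an auxiliary Lemma~\ref{Lem.Lemma1}: it adds an independent $Z\sim N(0,c_3)$ to make $\varphi+Z$ positively correlated, uses the FKG inequality to pass from the counting event to a tail bound on $\sum_{z\in D}\varphi(z)$, and then controls that sum \emph{scale by scale}, writing $\sum_z\varphi(z)=\sum_r\sum_z\varphi_r(z)$ with geometric thresholds $\Gamma_r\propto 2^{-(k-r)/2}$ and using only the same-scale independence of $\varphi_r$ at distance $2^{r+1}$ to obtain $\Var(\sum_z\varphi_r(z))=O(K\,2^r)$; every scale then contributes the identical exponent $c\,\e^2k^2\d^2$. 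Your approach---the soft-max $\Psi$ together with Borell--TIS and a single variance estimate $\sigma_\Psi^2=O(K^2)$---is more direct and avoids FKG entirely; the logarithmic cancellation $\sum_{d=1}^{K-1}(K-d)\log_2 d=\tfrac12 K^2 k+O(K^2)$ is performing exactly the job that the paper's geometric weighting over scales does. Both methods need $\d\ge C_8/(\e\sqrt k)$ for the same reason: to dominate the $O(\sqrt k)$ mean contribution (the paper in its Markov-inequality step, you in bounding $\E\Psi$).

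There is, however, a real gap in your justification of the key covariance bound. You write that $\Cov(\eta_{j-1}(z_i),\eta_{j-1}(z_{i'}))\le (C_1+2C_2)(k-\log_2|i-i'|)$ because ``only the scales $X_r$ with $r\ge\log_2|i-i'|+(j-1)k$ fail to be independent across the two sites,'' which presumes $X_r(z)\perp X_{r'}(w)$ for $r\ne r'$. Equation~\eqref{Eq.Xindpt} does not assert this: it gives independence of $\{X_r(x)\}_r$ at a \emph{fixed} point and of $X_r(x),X_r(y)$ at a \emph{fixed} scale for distant $x,y$, but cross-scale covariances at distinct points need not vanish (e.g.\ when $B_{2^{r+1}}(z)\not\subset B_{2^{r'}}(w)$ yet the larger boxes overlap). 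The bound you need is nonetheless correct and follows from the log-correlation directly: for $K^{j-1}\lesssim|z-w|\lesssim K^j$, write $\eta_{j-1}(w)=H_{K^{j-1}}(w)-H_{K^j}(w)$ as harmonic averages of $\eta^{V_{5N}}$ over $\partial B_{K^{j-1}}(w)$ and $\partial B_{K^j}(w)$, and combine \eqref{Eq.corr} with \eqref{Eq.logcorrelated} to obtain $\Cov(\eta_{j-1}(z),\eta_{j-1}(w))=C_1\log_2(K^j/|z-w|)+O(1)$. Since this upper bound is nonnegative for all $|i-i'|\le K$, the supremum over subsets $I\subset[K]$ defining $\sigma_\Psi^2$ is then handled by monotonicity. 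With that correction your argument goes through; note that the paper's scale-by-scale organization sidesteps this issue because it only ever invokes same-scale independence.
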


To this goal, we rescale $\R^2$ by regarding $K^{j-1}$ as unity. Then, roughly speaking, under the new scale,  paths in $T_j (B_1, B_2)$ all lie in the same ellipse with width $O(K)$ and height $O(1)$. This ellipse is what the set $D$ below stands for, where \eqref{Eq.ellipseshape} below describes the width and the height. We will show in Lemma~\ref{Lem.Lemma1} that a stronger result holds in the new scale of $\R^2$, and then prove Proposition~\ref{Prop.Lemma1}.

Suppose $D \subseteq \Z^2$. Let
 $$
D_{1,a} : = \left| \big\{ b \in \Z : (a, b) \in D \big\} \right| , \ \ \ \ \ D_1 = \max_{a \in \Z} D_{1,a},
 $$
 $$
D_{2,b} =  \left| \big\{  a \in \Z : (a, b) \in D \big\} \right| , \ \ \ \ \ D_2 = \max_{b \in \Z} D_{2, b}.
 $$

 \begin{lemma} \label{Lem.Lemma1}
Let $c_1, c_2, c_3 > 0$. Suppose that $D \subseteq \Z^2$ satisfies
 \begin{equation} \label{Eq.ellipseshape}
|D| \le c_1 K \ \ \ \mbox{ and } \ \ \ D_1 \wedge D_2 \le c_1.
 \end{equation}
Suppose that $\varphi_r$, $0 \le r \le k -1$ are mean-zero Gaussian fields in $D$ satisfying the following (i)-(iii).
\begin{enumerate}[ (i) ]
\item For each $z \in D$, $\varphi_0 (z), \cdots, \varphi_{k-1} (z)$ are independent.  For each $r$, $\varphi_r (z)$ is independent of $\varphi_r (w) $ if $|z-w|_\infty \ge 2^{r+1}$.

\item  $\E \varphi_r (z)^2 \le c_2$ for all $r, z$.

\item $\E \varphi (z) \varphi (w) \ge - c_3$ for all $z, w \in D$, where $\varphi :=  \sum_{r=0}^{k-1} \varphi_r$.

\end{enumerate}
Then, for $k \ge \frac {c_3}{c_2} \vee 6$,
 $$
\P \( \left| \big\{ z \in D : \varphi(z) \ge \e k \big\} \right| \ge \d K \) \le 5 k e^{- c \e^2 k^2 \d^2} \ \ \ \mbox{ for all } \  \frac {8 c_1 \sqrt {2 c_2}}{\sqrt k \e } \le \d \le 1 ,
 $$
where $c = \min \left\{ \frac 1 {128 c_3}, \frac 1 {128 c_1^2 c_3 },  \frac 1 {512 c_1^2 c_2} \right\} $.
 \end{lemma}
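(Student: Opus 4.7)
The plan is to combine a Gaussian tail estimate on sums of $\varphi$ over candidate level sets with a multi-scale union bound. Set $A := \{z \in D : \varphi(z) \ge \e k\}$ and $n := \lceil \d K \rceil$. The event $\{|A| \ge \d K\}$ implies that some size-$n$ subset $A' \subset D$ satisfies $\sum_{z \in A'} \varphi(z) \ge n \e k$, so a union bound over $A'$ reduces the claim to a Gaussian large-deviation estimate for each fixed $A'$, times the entropy of the choice of $A'$.

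For fixed $A'$, the scale decomposition $\varphi = \sum_{r=0}^{k-1} \varphi_r$ and assumption~(i) give $\Var(\sum_{A'} \varphi) = \sum_r \Var(\sum_{A'} \varphi_r)$. Partitioning $\Z^2$ into cells of side $2^{r+1}$, assumption~(i) makes $\varphi_r$ independent across cells; combined with~(ii) and Cauchy--Schwarz, this yields $\Var(\sum_{A'} \varphi_r) \le c_2 \sum_{B \in \B_r} |A' \cap B|^2$, where $\B_r$ is the cell partition at scale $r$. The thinness hypothesis $D_1 \wedge D_2 \le c_1$ bounds $|B \cap D| \le c_1 \cdot 2^{r+1}$. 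Applying Lemma~\ref{Lem.concentration} (or the Gaussian tail directly) then gives $\P(\sum_{A'} \varphi(z) \ge n \e k) \le \exp(-n^2 \e^2 k^2/(2 \Var))$.

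The delicate step is the union bound over $A'$. I would stratify by a dyadic ``spread scale'' $\ell \in \{2^0, \dots, 2^k\}$, the smallest dyadic $\ell$ such that $A'$ is contained in a subset of $D$ of diameter $\le \ell$. For such $A'$: scales $r$ with $2^{r+1} \ge \ell$ contribute at most $n^2$ each to $\sum_B |A' \cap B|^2$ (all of $A'$ lies in one cell at that scale), while scales $r$ with $2^{r+1} < \ell$ contribute at most $n \cdot c_1 \cdot 2^{r+1}$, giving $\Var \lesssim c_2 n^2 (k - \log_2 \ell + O(1))$. The number of $A'$ with spread $\le \ell$ and $|A'| = n$ is at most $(|D|/\ell)\binom{c_1 \ell}{n}$. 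Combining the Gaussian tail with this enumeration and optimizing over $\ell$ produces the target bound $e^{-c \e^2 k^2 \d^2}$; a final union bound over the $O(k)$ dyadic values of $\ell$ produces the prefactor $5k$.

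The main obstacle is this entropy-versus-tail trade-off: for spread $A'$ there are many candidates but strong concentration; for clustered $A'$ there are few candidates but weaker tails. The assumption $\d \ge 8 c_1 \sqrt{2 c_2}/(\sqrt{k}\e)$ is what makes the trade-off close---below that threshold, the expected number of points with $\varphi(z) \ge \e k$ is already of order $\d K$, so the event is not actually rare and no such estimate can hold. Assumption~(iii) enters through the $c_3$-dependence of the constant $c$ in the final exponent, presumably via a refined lower variance bound at the largest scales, where the raw estimate $\Var \le c_2 n^2 \log_2(1/\d)$ would otherwise be too loose to close the entropy bookkeeping.
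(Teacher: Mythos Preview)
Your union-bound approach has a fatal entropy problem. With $|D|\le c_1K$ and $n=\lceil\d K\rceil$, the number of candidate subsets $A'$ is $\binom{c_1K}{n}$, which is of order $\exp\bigl(\Theta(\d K\log(c_1/\d))\bigr)$. But $K=2^k$, whereas every Gaussian tail bound you can extract from the variance estimates is only $\exp\bigl(-O(\e^2k^2)\bigr)$ (indeed the target inequality itself is $\exp(-c\e^2k^2\d^2)$). No stratification by a spread parameter $\ell$ can close this gap: at the fully spread scale $\ell\approx K$ your own variance estimate gives $\Var\lesssim c_2 n c_1 K$, hence a tail of $\exp\bigl(-\d\e^2k^2/(2c_1c_2)\bigr)$, while the entropy at that scale is still $\exp\bigl(\Theta(\d K)\bigr)$. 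The entropy is exponentially larger in $k$ than the tail exponent, so the union bound blows up.

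The paper sidesteps the union bound altogether, and this is exactly where assumption~(iii) is used. One introduces an independent $Z\sim N(0,c_3)$ so that, by~(iii), the field $\{\varphi(z)+Z:z\in D\}$ is positively correlated. The event $E_\d=\{|\{z:\varphi(z)+Z\ge\tfrac78\e k\}|\ge\d K\}$ is increasing in this field, as is the event $E=\{\sum_{z\in D}(\varphi(z)+Z)_{-}\le 2c_1\sqrt{2c_2k}\,K\}$; the FKG inequality then gives $\P(E_\d)\le 2\P(E_\d\cap E)$. On $E_\d\cap E$ one has a lower bound on $\sum_{z\in D}(\varphi(z)+Z)$ over the \emph{whole} set $D$, not over an unknown subset $A'$. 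This reduces the problem to a single Gaussian tail bound on $\sum_{z\in D}\varphi(z)$, which is then handled by the scale decomposition $\sum_r\varphi_r$ with geometrically weighted thresholds $\Gamma_r$ and the variance bound $\sigma_r^2\le c_1^2c_2\,2^{k+r+2}$ coming from the thinness hypothesis. Your proposal uses~(iii) only speculatively; in fact it is the linchpin that replaces the impossible subset enumeration.
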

 \begin{proof}
To investigate the event $\big| \{ z \in D : \varphi(z) \ge \e k \} \big| \ge \d K$, we will compare the value of $\sum_{z \in D} \varphi (z)$ with $\e k \d K$. To this goal, we will add a common variable to the field $\varphi$ in order to obtain a positively correlated Gaussian field and employ the FKG inequality. Let $Z \sim N(0, c_3)$ be independent of the Gaussian field $\varphi$. Let
 $$
E_\d = \left\{ \left| \left\{ z \in  D : \varphi (z) + Z \ge \frac 7 8 \e k \right\} \right| \ge \d K \right\} .
  $$
Then,
  \begin{equation} \label{Eq.decomde}
\P \( \left| \big\{ z \in D : \varphi(z) \ge \e k \big\} \right| \ge \d K \) \le \P \( - Z \ge \frac 1 8 \e k \) + \P ( E_\d ) \le 2 e^{ -\frac 1 {128 c_3} \e^2 k^2 \d^2} + \P ( E_\d ) ,
  \end{equation}
where in the second inequality we have used Lemma~\ref{Lem.concentration} and $\d \le 1$. Next, we estimate $\P (E_\d)$. Define
   $$
E = \left\{ \sum_{z \in D} \big| \varphi (z) + Z \big| 1_{\{ \varphi (z) + Z  < 0 \}} \le 2  c_1 \sqrt {2 c_2 k} K  \right\} .
 $$
By independence, $\E \big( \varphi (z) + Z \big) ^2 = \E \varphi (z)^2 + \E Z^2 \le c_2 k + c_3 \le 2 c_2 k$, where we have used (i), (ii) as well as the assumption $k \ge c_3 / c_2$. Consequently, $\E | \varphi (z) + Z| \le \sqrt {2 c_2 k}$. It follows that
 $$
\P (E^c) \le \P \( \sum_{z \in D} |\varphi (z) + Z |  >  2  c_1 \sqrt {2 c_2 k} K \)  \le \frac 1 { 2  c_1 \sqrt {2 c_2 k} K }  |D| \sqrt {2 c_2 k} \le \frac 1 2 ,
 $$
where we have used \eqref{Eq.ellipseshape} in the last inequality. That is, $\P (E) \ge \frac 1 2$. Note that $\{ \varphi (z) + Z, z \in D \}$ is positively correlated by (iii) as well as the choice of $Z$, and that both $E_\d$ and $E$ are increasing events of it. By the FKG inequality  \cite{Pitt82}, $\P (E_\d) \P (E) \le \P (E_\d \cap E)$. Thus,
  $$
\P(E_\d)
 \le
\frac {\P(E_\d \cap E) }{\P(E)} \le 2 \P (E_\d \cap E) \le 2 \P \( \sum_{z \in D} ( \varphi (z) + Z) \ge \frac 7 8 \e k  \d K - 2 c_1  \sqrt {2 c_2 k} K \)  .
 $$
Note that $\frac 7 8 \e k  \d K - 2 c_1  \sqrt {2 c_2 k} K \ge \frac 5 8 \e k  \d K  $ since $\d \ge 8c_1 \sqrt {2 c_2} / (\sqrt k \e)$ by the assumption. Then, it follows that
  \begin{eqnarray}
\P(E_\d)
 & \le &
2 \P \(  |D| Z \ge \frac 1 8 \e \d K k  \)  + 2 \P \( \sum_{z \in D} \varphi (z) \ge  \frac 1 2  \e \d K k \) \nonumber
 \\ & \le &
4 e^{- \frac 1 {2 c_3} \cdot \frac 1 {64 c_1^2 } \e^2 k^2 \d^2 } +  2 \P \( \sum_{z \in D} \varphi (z) \ge   \frac 1 2  \e \d K k  \) , \label{Eq.tildeEdEd}
\end{eqnarray}
where in the second inequality, we have used Lemma~\ref{Lem.concentration} and \eqref{Eq.ellipseshape}.

Next, we estimate $\P \( \sum_{z \in D} \varphi (z) \ge   \frac 1 2  \e \d K k  \) $. Let
 $$
\Gamma : =  \frac 1 2 \e \d K k \ \ \ \mbox{ and } \ \ \ \Gamma_r := 2^{- \frac 1 2 (k-r) - 3} \e \d K k .
 $$
Note $\Gamma_r \le \( 1- \frac 1 {\sqrt 2} \) 2^{- \frac 1 2 (k-r)} \Gamma$. So, $\sum_{r=0}^{k-1} \Gamma_r \le \Gamma$.  It follows that
 \begin{equation} \label{Eq.gammagamma}
 \P \( \sum_{z \in  D} \varphi (z) \ge \Gamma \)  \le \sum_{r=0}^{k-1} \P \( \sum_{z \in D} \varphi_r (z) \ge \Gamma_r \)  .
 \end{equation}
Next, we will fix $r$ and deal with $\P \( \sum_{z \in  D} \varphi_r (z) \ge \Gamma_r \) $ by calculating $\E \( \sum_{z \in  D} \varphi_r (z) \) ^2$. By (i) and \eqref{Eq.ellipseshape}, for each $z$, there are at most $ c_1 2^{r+2}$ vertices $w \in D$ such that $\varphi_r (w)$ is not independent with $\varphi_r (z)$. Hence, $\E \( \varphi_r (z) \sum_{w \in D} \varphi_r (w) \) \le c_1 c_2 2^{r+2} $, by (ii). Combined with \eqref{Eq.ellipseshape}, this yields that
 $$
\s_r^2 : = \E \( \sum_{z \in D} \varphi_r (z) \) ^2 = \sum_{z \in D} \E \(  \varphi_r (z) \sum_{w \in D} \varphi_r (w)  \) \le  c_1 K c_1 c_2 2^{r+2} =  c_1^2 c_2 2^{k+r +2} .
 $$
By straightforward computations for Gaussian variables, we get that for each $r = 0, \ldots, k-1$,
 $$
\P \( \sum_{z \in D} \varphi_r (z) \ge \Gamma_r \) \le 2 e^{- \frac { \Gamma_r^2} {2 \s_r^2} } = 2 \exp \left\{ -  \frac { 2^{- (k-r ) - 6}  \e^2  \d^2 K^2 k^2} {c_1^2 c_2 2^{k+r+3}}  \right\}
 =
2 e^{-  \frac 1 {512 c_1^2 c_2}  \e^2  k^2  \d^2  } .
 $$
Combined with \eqref{Eq.decomde}, \eqref{Eq.tildeEdEd}, \eqref{Eq.gammagamma} and the assumption that $k \ge 6$, this yields the result.
 \end{proof}

\medskip

\noindent {\it Proof of Proposition~\ref{Prop.Lemma1}.} Equivalently, we will show the result for $j+1$, where $0 \le j \le m-2$.

Suppose $ (B_1, B_2) \in \END_{j+1} $. Let
 $$
A = \cup_{P \in T_{j+1} (B_1, B_2)} P, \ \ \ \ \ \hat D = \Z^2 \cap A \ \ \ \mbox{ and } \ \ \ \tilde D = \left\{ l_B : B \in \BD^A_{K^j} \right\} ,
 $$
where $\BD^A_{K^j}$ consists of boxes in $\BD_{K^j}$ intersecting with $A$, and $l_B$ is the lower left corner of $B \cap \Z^2$. By the definition of tame paths, there is a universal constant $\tilde C_1$ such that for each $(B_1, B_2) \in \END_{j+1}$,
 \begin{equation} \label{Eq.ellipssize}
|\hat D| \le \tilde C_1  K^{2j+1},  \ \ \ \hat D_1 \wedge \hat D_2 \le \tilde C_1 K^j , \ \ \ | \tilde D| \le \tilde C_1  K \ \ \ \mbox{and}  \ \ \ \tilde D_1 \wedge \tilde D_2 \le \tilde C_1 .
 \end{equation}
We set $c_1 = \tilde C_1$, $c_2 = C_1 + 2 C_2$ and $c_3 = C_4$ in Lemma~\ref{Lem.Lemma1}. Consequently, $c$ therein is set correspondingly, which is a universal constant here. Then $k \ge (c_3/c_2) \vee 6$ is satisfied, since $K$ is large. Set $C_7 = \min \left\{  \frac 1 {18 C_5} , \frac 1{600} c \right\} $ and $C_8 = 192 c_1 \sqrt {2 c_2}$, which are universal constants. Note that \eqref{Eq.Xindpt}, \eqref{Eq.C1C2}, Lemma~\ref{Lem.positivecorrelated} and \eqref{Eq.ellipssize} respectively imply (i)-(iii) and \eqref{Eq.ellipseshape} in Lemma~\ref{Lem.Lemma1}. That is, the assumptions in Lemma~\ref{Lem.Lemma1} all hold. Therefore, we will apply Lemma~\ref{Lem.Lemma1} without further checking conditions (i)-(iii) and  \eqref{Eq.ellipseshape}.

For $j = 0$, let $D = \hat D$ and  $\varphi_r : = X_r$. In this case, $\varphi = \eta_0$.  By Corollary~\ref{Cor.SLm}, $d_P \ge K /2$ for all $ P \in T_1 (B_1, B_2)$. It follows that
 $$
\P \big( \D_P \ge \d \mbox{ for some } P \in T_1 (B_1, B_2) \big) \le \P \( \left| \big\{ z \in D : \varphi (z) \ge \e k \big\} \right| \ge  \frac 1 2 \d K \) .
 $$
By the choice of $C_8$, $\d \ge \frac {C_8} {\sqrt k \e}$ implies that $\d /2 \ge \frac {8 c_1 \sqrt {2 c_2}}{\sqrt k \e / 2}$. Applying Lemma~\ref{Lem.Lemma1} to $\d/2$ (rather than $\d$), we conclude that
 $$
\P \big( \D_P \ge \d \mbox{ for some } P \in T_1 (B_1, B_2) \big) \le 5 k e^{- c \e^2 k^2 (\d / 2)^2} \le K^2 e^{-C_7 \e^2 k^2 \d^2},
 $$
where in the last inequality we have used the choice of $C_7$.

For $j \ge 1$, let $D =K^{-j}  \tilde D$, and let $\varphi_r (z) : = X_{jk + r} (K^j z)$ for all $z \in D$. In this case,  $\varphi (z) = \eta_j (K^j z)$. Applying Lemma~\ref{Lem.Lemma1} to $\e / 2$ (rather than $\e$), we have that for $\d \ge \frac {8 c_1 \sqrt {2 c_2}}{\sqrt k \e / 2}$,
 $$
\P (\hat  E_\d) \le 5 k e^{- c (\e/2)^2 k^2 \d^2}, \ \ \ \mbox{where } \hat E_\d : = \left\{ \left| \left\{ B \in \BD^A_{K^j} : \eta_j (l_B) \ge \frac 1 2 \e k \right\} \right| \ge \d K \right\} .
 $$

Next, we will estimate the fluctuation of $\eta_j$ within a box $B \in \BD_{K^j}^A$. Let $M_B := \max_{z \in B \cap \Z^2} \big( \eta_j (z) - \eta_j (l_B) \big)$. Then, by Lemma~ \ref{Lem.Harmdiff} and \ref{Lem.maxinbox}, we have $\E M_B \le \sqrt {C_5} C_6 $ and $\max_{z \in B \cap \Z^2} \E \big( \eta_j (z) - \eta_j (l_B)\big)^2 \le C_5$. Set $k_1 (\e) = \lfloor 6 \sqrt {C_5} C_6 / \e \rfloor + 1$. Then, for each $K \ge 2^{k_1 (\e)} = : K_1 (\e)$, we have $\frac 1 6 \e k \ge \E M_B$. By Lemma~\ref{Lem.concentration},
 $$
\P \( M_B > \frac 1 2 \e k \)  \le  \P \( M_B - \E M_B > \frac 1 3 \e k \) \le 2 e^{- \frac 1 {18 C_5}  \e^2 k^2} .
 $$
Since $|\tilde D| \le \tilde C_1 K$ by \eqref{Eq.ellipssize}, there are at most $\tilde C_1 K$ boxes $B \in \BD_{K^j}$ intersecting with $A$. By a union bound,
 $$
\P \( E^M \) \le \tilde C_1 K \times 2 e^{- \frac 1 {18 C_5}  \e^2 k^2}, \ \ \ \mbox{where } E^M : = \left\{ M_B > \frac 1 2 \e k \mbox{ for some } B \in \BD^A_{K^j} \right\} .
 $$
Note that
 $$
\tilde E_\d \subseteq E^M \cup \hat E_\d, \ \mbox{ where } \tilde E_\d : = \left\{ \left| \big\{ B \in \BD^A_{K^j} : \eta_j (z) \ge \e k \mbox{ for some } z \in B \big\} \right| \ge \d K \right\} .
 $$
It follows that
 $$
\P (\tilde E_\d) \le \P (E^M) + \P (\hat E_\d ) \le  2 \tilde C_1 K e^{- \frac 1 {18 C_5} \e^2 k^2 } + 5 k e^{-  \frac 1 4 c \e^2 k^2 \d^2}  \ \ \ \mbox{for all } \d \ge \frac {8 c_1 \sqrt {2 c_2}}{\sqrt k \e / 2} .
 $$

Finally, by (a) and (c) in Proposition~\ref{Prop.sub-paths}, the event $\D_P \ge \d$ for some $P \in T_{j+1} (B_1, B_2)  $ implies that $ \tilde E_{\d / 12}$ occurs. For $\d \ge C_8 / \sqrt k \e$, it holds that $\d / 12 \ge \frac {8 c_1 \sqrt {2 c_2}}{\sqrt k \e / 2}$. Consequently,
 $$
\P \big( \D_P \ge \d \mbox{ for some } P \in T_{j+1} (B_1, B_2) \big) \le 2 \tilde C_1 K e^{- \frac 1 {18 C_5} \e^2 k^2 \d^2} + 5 k e^{- \frac 1 4 c  \e^2 k^2 (\d/12)^2}  \le K^2 e^{- C_7 \e^2 k^2 \d^2} ,
 $$
completing the proof. \qed

\subsection {The fraction of vertices with high Gaussian values}  \label{Sect.Lemma3}

We first recall that $\p_{\k, \d, K} = \left\{ P :  P \mbox{ is a path in } V_N, \ \| P \| \ge \k N \mbox{ and } |P| \le N^{1 + \frac \d {K^2 k}} \right\} $, and that $\p_\k  = \left\{ P : \| P \| \ge \k N \right\} $, as well as $\p_{\k, \d, K} \subseteq \p_\k$. Also recall that a path $P$ is associated with a tree $\t_P$, which has depth $j \in [m-1]$ if $P \in \SL_j$, and depth $m$ if $P \in \p_\k$. Recall that $\theta_P$ is the unit uniform flow on $\t_P$. In addition we recall that $\rho$ and $\mathcal L$ are respectively the root and the set of leaves of $\t_P$, and that $L(u)$ is the level of a node $u$, with $L(\rho) = 0$. Furthermore, recall that a node $u \in \t_P$ is identified with a sub-path $P^u$ of $P$, and $u$ is said to be tame/open if so is $P^u$. Let $\D_u = \D_{P^u}$, which depends on $\eta_{j-1}$ if $P^u \in \SL_j$. In this section, we aim to show the following proposition.

 \begin{prop} \label{Lem.Lemma3}
There exists a large $K(\d, \e) > 0$ such that for each $K \ge K(\d, \e)$,
 $$
\lim_{m \to \infty} \P \( \sum_{v \in \t_P \setminus \{ \rho \} } \theta_P (v) 1_{\{ v \mbox{ is open} \}}  \le 4 \d m \ \mbox{ for all }P \in \p_{\k, \d, K} \) = 1 .
 $$
 \end{prop}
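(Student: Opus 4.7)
The plan is to rewrite the left-hand side in terms of parent nodes, separate tame and untame contributions, and control each using tools already developed. Grouping each non-root $v$ with its unique parent $u$ (so $\theta_P(v) = \theta_P(u)/d_u$, and the $d_u \D_u$ open children of $u$ contribute $\theta_P(u)\D_u$ together) yields the identity
$$\sum_{v \in \t_P \setminus \{\rho\}} \theta_P(v) 1_{\{v \text{ open}\}} = \sum_{u \text{ non-leaf}} \theta_P(u)\D_u.$$
I would split the right-hand side according to whether $u = \rho$, $u$ is untame with $1 \le L(u) \le m-1$, or $u$ is tame. The root contributes at most $\theta_P(\rho)\D_\rho \le 1 \le \d m$ for $m \ge 1/\d$; the untame contribution is bounded using $\D_u \le 1$ and Proposition~\ref{Lem.Lemma2} by $\sum_{u \text{ untame}} \theta_P(u)\D_u \le 2\d m$, deterministically in the Gaussian field, for every $P \in \p_{\k,\d,K}$.

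The tame contribution is the heart of the matter. I would aim to show that with probability tending to $1$, every tame non-leaf $u$ in every tree $\t_P$ (over $P \in \p_{\k,\d,K}$) satisfies $\D_u \le \d$. Given this, since $\sum_{u : L(u) = \ell} \theta_P(u) = 1$ for each $\ell \in \{0, 1, \ldots, m-1\}$,
$$\sum_{u \text{ tame, non-leaf}} \theta_P(u)\D_u \le \d \sum_{u \text{ non-leaf}} \theta_P(u) \le \d m,$$
so the three pieces combine to $1 + 2\d m + \d m \le 4\d m$ for $m$ large, as required.

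To establish the uniform tame bound, introduce the global event
$$G := \bigcap_{j=1}^{m-1}\, \bigcap_{(B_1, B_2) \in \END_j}\, \{\forall\, P' \in T_j(B_1, B_2) : \D_{P'} \le \d\}.$$
Any tame non-leaf $u$ with $L(u) = m - j$ has $P^u \in T_j(B_1^u, B_2^u)$ for some $(B_1^u, B_2^u) \in \END_j$, so $G$ automatically forces $\D_u \le \d$ for every such $u$. Proposition~\ref{Prop.Lemma1} furnishes the per-pair bound $\P(\exists\, P' \in T_j(B_1, B_2) : \D_{P'} > \d) \le K^2 e^{-C_7 \e^2 k^2 \d^2}$ (valid once $\d \ge C_8/(\sqrt{k}\e)$, which we enforce by taking $k$ large in terms of $\d, \e$). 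A crude count gives $|\END_j| = O(N^2 K^{7-2j})$: $B_1 \in \BD_{K^{j-2}} \cap V_{5N}$ has $O(N^2 K^{4-2j})$ choices, while $B_2$ lies in an annulus of area $O(K^{2j-1})$ around $B_1$, giving $O(K^3)$ choices. A union bound over $j$ and $(B_1, B_2)$ then yields $\P(G^c) \le \sum_{j=1}^{m-1} |\END_j|\, K^2 e^{-C_7 \e^2 k^2 \d^2}$.

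The main obstacle is quantitatively pushing $\P(G^c) \to 0$ as $m \to \infty$. The per-pair tail $e^{-C_7 \e^2 k^2 \d^2}$ is merely a constant for fixed $K, \e, \d$, whereas $\sum_j |\END_j|$ grows polynomially in $N = K^m$. Closing the estimate relies on the paper's convention that $K$ (hence $k = \log_2 K$) may be taken arbitrarily large depending on $(\e, \d)$, together with the assumed large-$m$ inequalities such as $(Ke^{-k})^{\d m} < \d$, so that the doubly-exponential-in-$k$ decay $K^2 e^{-C_7 \e^2 k^2 \d^2}$ absorbs the polynomial enumeration of end-box pairs once $K$ is chosen appropriately. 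I expect this delicate balancing of the scales $k, K, m, N, \d, \e$ --- the hallmark of the multi-scale/chaining framework emphasized in the proof strategy --- to be the subtle step that closes the argument.
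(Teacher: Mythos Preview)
Your decomposition into root, untame, and tame contributions is exactly right, and the first two pieces are handled correctly. The gap is in the tame piece: the global event $G$ cannot be shown to have probability tending to $1$ by the union bound you propose, and this is not a matter of ``delicate balancing'' but a genuine obstruction. For any \emph{fixed} $K$ (and $K$ is fixed as $m \to \infty$), the per-pair tail $K^2 e^{-C_7 \e^2 k^2 \d^2}$ is a fixed positive constant, while $\sum_j |\END_j|$ grows like $N^2 = K^{2m}$. No choice of $K$ large in terms of $(\d,\e)$ rescues this, because you need $e^{-C_7 \e^2 k^2 \d^2}$ to beat $e^{2mk}$, i.e.\ $k \gtrsim m$, which contradicts $k$ fixed and $m \to \infty$.

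The paper does \emph{not} attempt to control $\D_u$ uniformly over all tame $u$. Instead it controls the \emph{averaged} quantities $Y_{P,r} = \sum_{L(u)=r} \theta_P(u)\D_u 1_{\{u \text{ tame}\}}$ directly, via an inductive chaining lemma (Lemma~\ref{Lem.inductionxir}). The point is that $Y_{P,r+1}$ is an average of $d \ge K$ copies of $Y_{P^{(i)},r}$, and these are nearly independent (up to the $12$-times rule). Averaging $d$ nearly-independent variables boosts the concentration rate by a factor of roughly $K$, so the tail constant $c_r$ in $\P(\xi > \d) \le 2e^{-c_r k^2(\d-\d_r)^2}$ grows geometrically: $c_r = C_7\e^2(\b K)^r$. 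This geometric growth in $r$ is what ultimately defeats the polynomial-in-$N$ enumeration of end-boxes at each level. Your approach misses this compounding of concentration across scales, which is precisely the ``chaining'' alluded to in the proof-strategy section.
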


We will express $\sum_{v \in \t_P \setminus \{ \rho \} } \theta_P (v) 1_{\{ v \mbox{ is open} \}} $ in terms of
 \begin{equation} \label{Eq.defYPr}
Y_{P, r} : = \sum_{u \in \t_{P, r} } \theta_P (u) \D_u 1_{\{ u \mbox{ is tame} \}} ,
 \end{equation}
where $\t_{P, r} : = \{ u \in \t_P : L(u) = r \} $. Note that $Y_{P,r}$'s satisfy the following recursion formula:
 \begin{equation} \label{Eq.YPr}
Y_{P, r+1} = \frac 1 d \sum_{i=1}^d \sum_{u \in \t_{ P^{(i)}, r} } \theta_{P^{(i)}}(u) \D_u 1_{\{ u \mbox{ is tame} \}} = \frac 1 d \sum_{i=1}^d Y_{P^{(i)}, r} ,
 \end{equation}
where $d = d_P$ and $P^{(i)}$, $i \in [d]$ are child-paths of $P$ constructed in Proposition~\ref{Prop.sub-paths}. Let
 $$
\xi_{r, j, B_1, B_2} : = \max \left\{ Y_{P, r}, \ P \in \p_j \big( B_1, B_2 \big) \right\} .
 $$

Recall (c) of Proposition~\ref{Prop.sub-paths}. We denote by $\END_{j, d}$ the sequences $(B_{i,1}, B_{i,2})$, $i \in [d]$ in $\END_j$ such that the 12-times-rule holds. Namely, let
 $$
\END_{j, d} : = \left\{ \big\{ \( B_{i,1}, B_{i,2} \) \big\}_{i \in [d]} \subseteq \END_j : \left| \big\{ i : B_{i,1} \subseteq B \big\} \right| \le 12  \  \mbox{ for all } B \in \BD_{K^j} \right\} .
 $$

 \begin{lemma} \label{Lem.inductionxir}
Set
 $$
\b = 1 / 2^9, \ \ \ c_r = C_7 \e^2 (\b K)^r, \ \ \ \d_0 = \frac {C_8 \vee \sqrt {2 / {C_7}  }} {\e \sqrt k}  \ \ \ \mbox{and} \ \ \ \d_r = \d_0 + \sum_{s=0}^{r-1} \frac {4 + ( 1 \vee \sqrt {2 s / 3} ) }{ \sqrt { c_s \b k} } .
 $$
Then, there exists a large $K_2 (\e) > 0$ such that for each $K \ge K_2 (\e)$ the following hold for all $j \in [m-1]$ and $0 \le r \le j-1$.

(i) Suppose $(B_1, B_2) \in \END_j$. Then, $\xi := \xi_{r, j, B_1, B_2} $ satisfies
 $$
\P(\xi > \d) \le 2 e^{- c_r k^2 (\d - \d_r)^2} \ \ \ \mbox{ for all } \d \ge \d_r .
 $$

(ii) Let $d \ge 1$. Suppose $\left\{ \( B_{i,1} , B_{i,2} \) \right\} _{i \in [d] } \in \END_{j,d}$, and let $\xi_i = \xi_{r, j, B_{i,1}, B_{i,2} }$, $i \in [d]$. Then,
 $$
\P \( \frac 1 d \( \xi_1 + \cdots + \xi_d \) > \d \) \le  \( K^{- 16 }  e^{- \b c_r k^2  (\d - \d_{r+1})^2} \) ^d  \ \ \ \mbox{ for all } \d \ge \d_{r+1} .
 $$
 \end{lemma}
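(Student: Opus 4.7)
I prove (i) and (ii) jointly by induction on $r$: establish the base case (i) at $r=0$; then alternate between Step A, which shows (i) at $(r,j)$ implies (ii) at $(r,j)$, and Step B, which shows (ii) at $(r,j)$ implies (i) at $(r+1,j+1)$.

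\emph{Base case.} Since $\t_{P,0}=\{\rho\}$ and $\theta_P(\rho)=1$, $Y_{P,0}=\D_P\,1_{\{P\text{ tame}\}}$, so $\xi_{0,j,B_1,B_2}=\max_{P\in T_j(B_1,B_2)}\D_P$. Proposition~\ref{Prop.Lemma1} directly gives $\P(\xi_{0,j,B_1,B_2}>\d)\le K^2 e^{-C_7\e^2k^2\d^2}$ for $\d\ge C_8/(\e\sqrt k)$; with $c_0=C_7\e^2$ and $\d_0$ as prescribed, the estimate $c_0 k^2\d_0^2\ge 2k\ge 2\log K$ absorbs the $K^2$ prefactor into the shift to produce $\P\le 2 e^{-c_0 k^2(\d-\d_0)^2}$.

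\emph{Step A.} From (i), $\xi_i-\d_r$ is sub-Gaussian with parameter $\asymp 1/(c_r k^2)$. To obtain the $d$-fold product form required by (ii), I would apply a Chernoff/Cram\'er-type argument, decoupling the $\xi_i$'s by using the 12-times-rule on $\BD_{K^j}$-boxes together with the finite-range independence \eqref{Eq.Xindpt} of the scale increments $X_r$ that assemble $\eta_{j-r-1}$. This permits a reorganization of the $\xi_i$'s into a bounded number of subcollections of essentially independent variables (in the spirit of the FKG/sub-Gaussian summation used in Lemma~\ref{Lem.Lemma1}), with $\b=2^{-9}$ absorbing the concentration-parameter loss from the decoupling and yielding an exponent $-d\b c_r k^2(\d-\d_r)^2$. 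The inequality $(\d-\d_r)^2\ge(\d-\d_{r+1})^2+(\d_{r+1}-\d_r)^2$ valid for $\d\ge\d_{r+1}$, combined with the definition of $\d_{r+1}$ (which ensures $\b c_r k^2(\d_{r+1}-\d_r)^2\ge 16\log K$), then carves out the extra factor $(K^{-16})^d$.

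\emph{Step B and main obstacle.} For $P\in\p_{j+1}(B_1,B_2)$, Proposition~\ref{Prop.subpaths} extracts $d_P\ge K$ subpaths $P^{(i)}\in\SL_j$ whose end-boxes form a sequence in $\END_{j,d_P}$, and the recursion \eqref{Eq.YPr} gives $Y_{P,r+1}\le d_P^{-1}\sum_i\xi_{r,j,B_{i,1}^P,B_{i,2}^P}$. Maximizing over $P$ and union-bounding over $d\ge K$ and over end-box sequences in $\END_{j,d}$ lying inside the $O(K^{j+1})$-ellipse around $(B_1,B_2)$---of which there are at most $K^{12d}$, since each $\BD_{K^{j-2}}$-box in the ellipse has $O(K^6)$ positions---then invoking (ii) at $(r,j)$ yields $\P(\xi_{r+1,j+1,B_1,B_2}>\d)\le\sum_{d\ge K}K^{-4d}e^{-d\b c_r k^2(\d-\d_{r+1})^2}\le 2 e^{-c_{r+1}k^2(\d-\d_{r+1})^2}$, using $c_{r+1}=\b K c_r$ and that $K^{-4K}$ far exceeds the geometric-sum constant and $1/2$. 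The main technical hurdle is Step A: producing genuine $d$-fold concentration for correlated $\xi_i$'s depending on overlapping portions of a log-correlated field. The 12-times-rule and the scale decomposition are the essential tools, and the delicacy lies in arranging the constants $\b$ and $\d_{r+1}-\d_r$ so that Step B's enumeration $K^{12d}$ and the base-case prefactor $K^2$ are cleanly absorbed along the induction.
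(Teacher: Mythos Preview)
Your outline matches the paper's proof: the same three-step induction (base case via Proposition~\ref{Prop.Lemma1}, then (i)$\Rightarrow$(ii) at fixed $r$, then (ii) at level $j$ $\Rightarrow$ (i) at level $j+1$ via the recursion \eqref{Eq.YPr} and a union bound over $d\ge K$ and end-box sequences), and your counting in Step~B is essentially the paper's (they use $K^{14d}$ rather than $K^{12d}$, which is immaterial).

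Two clarifications on Step~A, where you are vague. First, the decoupling is not FKG but H\"older: the paper partitions $\BD_{K^j}$ into $36$ translates of a $6\times 6$ sublattice and, by the $12$-times-rule, splits the $\xi_i$'s into $36\cdot 12=432\le 2^9=1/\b$ groups, each consisting of genuinely independent variables (by \eqref{Eq.Xindpt}); then H\"older with exponent $1/\b$ gives $\E e^{a\b\sum_i(\xi_i-\d)}\le\prod_i(\E e^{a(\xi_i-\d)})^\b$, which is the source of the factor $\b$ in the exponent. Second, the increment $\d_{r+1}-\d_r$ does \emph{two} jobs, not one: writing it as $a_r+b_r$ with $a_r=4/\sqrt{c_r\b k}$ and $b_r=(1\vee\sqrt{2r/3})/\sqrt{c_r\b k}$, the $a_r$-part produces the $K^{-16}$ you identified, while the $b_r$-part is needed to kill the polynomial prefactor $3\sqrt{c_r}\,k^2$ coming from the sub-Gaussian MGF integral $\E e^{a\xi}\le(1+a/\sqrt{c_r})e^{a^2/(4c_rk^2)+a\d_r}$ after optimizing in $a$. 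Without the $b_r$-contribution the induction would leak a factor growing like $(\b K)^{r/2}$ and fail.
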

  \begin{proof}
Set $k_2 (\e) := \min \{ k \in \Z : \sqrt {C_7} \e k \ge 1 \mbox{ and } 3 \sqrt{C_7} \e k^2 e^{- \frac 1 6 k} \le 1 \} \vee \log_2 K_1 (\e)$, and set $K_2 (\e) = 2^{k_2 (\e)}$. We will prove the results by induction on $r$. In Step 1, we will check (i) for $r = 0$ and all $j \in [m-1]$. In Step 2, we will show that (i) implies (ii). In Step 3, we will show (i) for $r+1$ and all $j \in [r+2, m-1]\cap \mathbb Z$, provided that (ii) holds for all $j \in [r+1, m-1] \cap \Z$. Assuming these, we will obtain the lemma by the induction, completing the proof. Next, we will carry out the details for these three steps.

\medskip

\noindent {\bf Step 1.} Let $r=0$.  Note that $Y_{P, 0} = \D_P 1_{\{ P \mbox{ is tame} \}}$. Hence, $\xi = \max \left\{ \D_P, P \in T_j \( B_1, B_2 \) \right\} $. We need to check (i) for all $j \in [m-1]$.  This follows from Proposition~\ref{Prop.Lemma1} directly, since $\d \ge \d_0$ implies $K^2 e^{- C_7 \e^2 k^2 \d^2} \le e^{2 k - C_7 \e^2 k^2 \d_0^2 - C_7 \e^2 k^2 (\d - \d_0)^2 } \le 2 e^{- c_0 k^2 (\d - \d_0)^2}$ for $K \ge K_2 (\e) \ge K_1 (\e)$.

\medskip

\noindent {\bf Step 2.} Assuming (i) holds, we will show (ii). For each $P \in \SL_j$, $P \subseteq B(x_P, 2 K^j)$ by the definition of $\SL_j$. If $d_\infty (B_{i,1}, B_{i^\prime,1} ) \ge 5 K^j$, we have $d_\infty \( \cup_{P \in \p_j \( B_{i,1}, B_{i,2} \) } P  , \ \cup_{ P' \in \p_j ( B_{i^\prime,1}, B_{i^\prime,2} ) } P' \) \ge K^j $. Consequently, $\xi_i$ and $\xi_{i^\prime}$ are mutually independent by \eqref{Eq.Xindpt} since they rely on the field $\eta_{j-1-r}$.

Next, we will classify $\xi_i$'s into $432$ groups, such that $\xi_i$'s in each group are mutually independent. Concretely, we classify $\BD_{K^j}$ into $36$ families $\F_s$, $s \in [36]$, where $\F_1$ consists of boxes respectively containing $(6 a K^j,  6 b K^j)$, $a,b \in \Z$ and other $\F_s$'s are its shifts. Accordingly, let $\G_s : = \{ B_{i,1} : i \in [d], \mbox{ and } B_{i,1} \subseteq B \mbox{ for some } B \in \F_s \}$. By the 12-times-rule, we can classify each $\G_s$ into 12 groups $\G_{s,t}$, $t \in [12]$, such that for each $s,t$, a box in $\F_s$ contains at most one $B_{i,1}$ in $\G_{s,t}$. Finally, we classify $B_{i,1}$'s (equivalently,  $\xi_i$'s) into $432 (\le 2^9 = 1/\b )$ groups $\G_{s,t}$'s such that all $\xi_i$'s in each $\G_{s,t}$ are mutually independent.

Set $W_{s,t} : = \prod_{B_{i,1} \in \G_{s,t}} e^{a \b (\xi_i - \d )}$, where $a > 0$. Then
 \begin{eqnarray*}
\E e^{a \b \sum_i (\xi_i - \d) }
 & = &
\E \prod_{s=1}^{36} \prod_{t=1}^{12} W_{s,t} \le \prod_{s=1}^{36} \prod_{t=1}^{12}  \(  \E W_{s,t}^{1/ \b} \) ^\b
 \\ & \le &
\prod_{s=1}^{36} \prod_{t=1}^{12} \prod_{B_{i,1} \in \G_{s,t}}  \( \E e^{ a  ( \xi_i   - \d  ) } \) ^\beta  = \prod_{i=1}^d \( \E e^{a ( \xi_i - \d ) }  \) ^{\beta} .
 \end{eqnarray*}
It follows that for every $a > 0$,
 \begin{equation} \label{Eq.average1}
\P \( \frac 1 d \( \xi_1 + \cdots + \xi_d \) > \d \) \le \E e^{a \b \sum_i (\xi_i - \d) }  \le \prod_{i=1}^d \(  \E e^{a ( \xi_i - \d ) } \) ^\b .
 \end{equation}

Set $\xi = \xi_i$ for any $i$ for brevity. Next, we will estimate $\E e^{a \xi }$. By (i) and the fact that $0 \le \xi \le 1$, it holds that
 \begin{eqnarray*}
\E e^{a \xi}
 & = &
e^{a \d_r} + \int_{\d_r}^1 \P( \xi > z) a e^{a z} d z \le e^{a \d_r} + 2 a e^{a \d_r} \int_{-\infty}^{\infty} e^{- c_r k^2 (z-\d_r)^2} e^{a (z-\d_r)} d z
 \\ & \le &
e^{a \d_r} + \frac a { \sqrt {c_r}} e^{a \d_r + \frac {a^2}{4 c_r k^2}} \le \( 1 + \frac a { \sqrt {c_r}} \) e^{\frac {a^2}{4 c_r k^2} + a \d_r }.
 \end{eqnarray*}
Consequently,  $\E e^{a ( \xi - \d )}  \le \( 1 + \frac a { \sqrt {c_r}} \) e^{\frac {a^2}{4 c_r k^2} - a (\d - \d_r) } $. For each $\d \ge \d_r$, we set $a = 2 c_r k^2 (\d - \d_r)$ so that the exponent is optimized. Then,
 $$
\E e^{a ( \xi  - \d )}  \le \( 1 + \frac {2 c_r k^2 (\d - \d_r)} { \sqrt {c_r}} \) e^{  - c_r k^2 (\d - \d_r)^2 } \le  3 \sqrt {c_r} k^2 e^{- c_r k^2 (\d - \d_r)^2 } ,
 $$
where in the last inequality we have used $K \ge K_2 (\e)$.

Note that
 $$
\d_{r+1} - \d_r = a_r + b_r, \ \ \ \mbox{where } a_r = \frac {4} {\sqrt { c_r \b k} } \mbox{ and }  b_r =  \frac {1 \vee \sqrt {2 r / 3} }{ \sqrt { c_r \b k} } ,
 $$
 $$
e^{- \b c_r k^2 a_r^2} = e^{- 16 k} \le K^{- 16},
  $$
  $$
\mbox{and } \ \ \ 3 \sqrt {c_r} k^2  e^{- \b c_r k^2 b_r^2}  = 3 \sqrt{C_7} \e k^2 (\b K)^{\frac r 2} e^{ - (1 \vee \frac {2r}{3}) k } \le 3 \sqrt{C_7} \e k^2 e^{- \frac 1 6 k} \le 1,
 $$
where in the last inequality we have used $K \ge K_2 (\e)$.

Combining the four displays above, we conclude that for $\d \ge \d_{r+1}$,
 $$
\( \E e^{a ( \xi - \d )}  \) ^ \b
 \le
3 \sqrt {c_r} k^2   e^{- \b c_r k^2 \(  (\d - \d _{r+1} )^2 + a_r^2 + b_r^2 \) }  \le K^{-16}  e^{- \b c_r k^2  (\d - \d _{r+1} )^2 } .
 $$
The above inequality holds for $\xi = \xi_i$ for all $i$. Combined with  \eqref{Eq.average1}, this implies (ii).

\medskip

\noindent {\bf Step 3.} Assume that (ii) holds for all $j \in [r+1, m-1] \cap \Z$. Then, we will show (i) for $r+1$ and all $j \in [r+2, m-1] \cap \Z$. Suppose that $(B_1, B_2) \in \END_j$. First, we fix the end-boxes of the child-paths. Concretely, for $d \ge K$ and each sequence $\SS : = \left\{ \big( B_{i,1}, B_{i,2} \big) \right\}_{i \in [d]}$ in $\END_{j-1, d}$, we set
 $$
\p_{j,d} : =  \left\{ P \in \p_j \big( B_1, B_2 \big) : d_P = d \right\} , \ \ \ \zeta_d : =  \max \big\{ Y_{P, r} \ , \ P \in \p_{j,d} \big\} ,
 $$
 $$
\p_{j, \SS } : = \left\{ P \in \p_{j,d} : \ P^{(i)} \in \p_{j-1} \big( B_{i,1}, B_{i,2} \big) \mbox{ for all } i \in [d] \right\} , \ \ \
\zeta_{\SS} : = \max \big\{ Y_{P, r} \ , \ P \in \p_{j, \SS } \big\} .
 $$
By \eqref{Eq.YPr}, $ \zeta_{\SS} \le \frac 1 d \sum_{i=1}^d \xi_i$, where $\xi_i : = \xi_{r, j-1, B_{i,1}, B_{i,2}}$ for $i \in [d]$. If $r+ 2 \le j \le m-1$, we have $r+1 \le j-1 \le m-1$. For each $\d \ge \d_{r+1}$, we apply (ii) to $j-1$, and have
 $$
\P (\zeta_{\SS} > \d )  \le  \( K^{- 16}  e^{- \b c_r k^2  (\d - \d _{r+1} )^2 }  \) ^d  \ \ \ \mbox{ for all } \SS \in \END_{j-1, d}.
 $$

By the definition of $\END_{j-1}$ and $\SL_j$, we have that $B_{i,1}, B_{i,2} \in \BD_{K^{j-3}}$ and that there are at most $K^7$ boxes in $\BD_{K^{j-3}}$ intersecting with some path in $\p_j (B_1, B_2)$. Therefore, we can find at most $K^{14 d}$ sequences $\SS_i \in \END_{j-1, d}$ such that $\p_{j,d}  \subseteq \cup_i \p_{j, \SS_i }$. By a union bound,
 \begin{equation} \label{Eq.YPr11}
\P (\zeta_d > \d) \le K^{14 d} \max_i \P (\zeta_{\SS_i} > \d) \le  \( K^{- 2}  e^{- \b c_r k^2  (\d - \d _{r+1} )^2 }  \) ^d .
 \end{equation}
Note that $ K^{- 2}  e^{- \b c_r k^2  (\d - \d _{r+1} )^2 }  \le K^{-2} \le \frac 1 2$. By (a) of Proposition~\ref{Prop.sub-paths},
 $$
\P \( \xi > \d \) \le \sum_{d = K}^\infty \P (\zeta_d > \d) \le 2 \( K^{- 2}  e^{- \b c_r k^2  (\d - \d _{r+1} )^2 }  \) ^K \le 2  e^{- \b K c_r k^2  (\d - \d _{r+1} )^2 }  =   2  e^{- c_{r+1} k^2  (\d - \d _{r+1} )^2 } .
 $$
That is, (i) holds for $r+1$ and all $j \in [r+2, m-1] \cap \Z$.
  \end{proof}

\medskip

\noindent  {\it Proof of Proposition~\ref{Lem.Lemma3}.}
First, we write $\sum_{v \in \t_P \setminus \{ \rho \} } \theta_P (v) 1_{\{ v \mbox{ is open} \}} $ in terms of $Y_{P,r}$'s as follows.
 \begin{eqnarray*}
  & &
\sum_{v \in \t_P \setminus \{ \rho \}} \theta_P(v) 1_{\{ v \mbox{ is open} \}}  = \sum_{u \in \t_P \setminus \l } \sum_{v} \theta_P(v) 1_{\{ v \mbox{ is an open child of } u \}}  = \sum_{u \in \t_P \setminus \l } \theta_P(u)  \D_u
  \\ & \le &
\theta_P (\rho) +  \sum_{u: 1 \le L(u) \le m-1 } \theta_P(u) \D_u 1_{\{ u \mbox{ is tame} \}} +  \sum_{u: 1 \le L(u) \le m-1} \theta_P(u) 1_{\{ u \mbox{ is untamed} \}} ,
 \end{eqnarray*}
where we have used $\D_u \le 1$ for all $u$. Recall \eqref{Eq.defYPr}. By Proposition~\ref{Lem.Lemma2} and the fact that $\theta_P (\rho) = 1 \le \d m$, we have
 $$
\sum_{v \in \t_P \setminus \{ \rho \}} \theta_P(v) 1_{\{ v \mbox{ is open} \}}  \le \sum_{r = 1}^{m - 1 } Y_{P,r} + 3 \d m \ \ \ \mbox{for all } P \in \p_{\k , \d, K} .
 $$
Therefore, in order to prove Proposition~\ref{Lem.Lemma3} we only need to check
 \begin{equation} \label{Eq.star}
\lim_{m \to \infty} \P \( \sum_{r=1}^{m-1} Y_{P,r} > \d m \mbox{ for some } P \in \p_{\k, \d, K} \) = 0 .
 \end{equation}

Next, we check \eqref{Eq.star}. Recall that $c_r = C_7 \e^2 (\b K)^r$, where $\b = 1/ 2^9$. The definition of $\d_r$ in Lemma~\ref{Lem.inductionxir} implies that $\d_r \le \frac 1 \e \d (K)$ for all $r \ge 0 $, with
 $$
\d (K) := \frac 1 {\sqrt k} \(  C_8 \vee \sqrt { \frac 2 {C_7}  } + \sum_{s=0}^\infty \frac {4 + ( 1 \vee \sqrt {2 s / 3} ) }{ \sqrt {  C_7  \b (\b K)^s} } \) .
 $$
There exists $K(\d, \e) \ge K_2 (\e)$ such that $\d \big( K(\d, \e) \big) < \frac 1 2 \d \e$. For each $K \ge K(\d, \e)$, we have $\sum_{r=0}^{m-1} ( \d_r + \frac 1 4 2^{-r} m \d ) \le m \frac 1 2 \d + \frac 1 2 m \d = m \d$. Consequently, to prove \eqref{Eq.star}, it suffices to check that $\lim_{m \to \infty} \sum_{r=1}^{m-1} p_{m,r} = 0$, where
 $$
p_{m,r} := \P \( Y_{P, r} > \d_r + \frac 1 4 2^{-r} m \d  \mbox{ for some } P  \in \p_{\k, \d, K}   \) .
 $$
We now estimate $p_{m,r}$, following the arguments in Step 3 in the proof of Lemma~\ref{Lem.inductionxir}. Since $\k N < K^{m+2}$, there are at most $\( K^6 / \k \) ^2$ boxes in $\BD_{m-3}$ intersecting with $V_N$. Recall that $d_P = d_0$ by Corollary~\ref{Cor.Pktree}. Similar to \eqref{Eq.YPr11}, we have
 $$
p_{m,r} \le \( \frac {K^{24}}{\k^4} \times K^{-16} e^{- \b c_{r-1} k^2  (\frac 1 4 2^{-r} m \d )^2  } \) ^{d_0} = \( \frac {K^{8}}{\k^4} \exp \left\{ - 2 a \( \frac {\b K}{4} \) ^{r-1}  m^2  \right\} \) ^{d_0} ,
 $$
where
 $$
a =  \frac 1{128}  C_7 \e^2 \b k^2 \d^2 .
 $$
Note that for $r \in [m-1]$,
 $$
\frac {K^8} {\k^4} \exp \left\{ - a \( \frac {\b K}{4} \) ^{r-1}  m^2  \right\} \le \frac {K^8} {\k^4} e^{- a m^2}   \le 1, \ \ \ \mbox{ and } \exp \left\{ - a \( \frac {\b K}{4} \) ^{r-1}  m^2  \right\}  \le e^{- a m^2 r}
 $$
since $m \to \infty$, and $\b K / 4 \ge e$ implying that $(\b K / 4)^{r-1} \ge r$ for all $r \ge 1$. It follows that
 $$
p_{m,r} \le  e^{- a m^2 r d_0} \le e^{- a K m^2 r}  \ \ \ \mbox{ for all } r \in [m-1],
 $$
noting $d_0 \ge K$ by \eqref{Eq.d0}. Consequently, $\sum_{r=1}^{m-1} p_{m,r} \le \frac 1 {e^{a K m^2} - 1}$, which converges to $0$ as $m \to \infty$. Therefore, \eqref{Eq.star} holds.   \qed

\subsection{Proof of Theorem~\ref{mainthm}}  \label{Sect.Pfmain}

Recall that each $P \in \p_{\k, \d, K}$ is associated with a tree with depth $m$, and that the set of leaves is denoted by $\mathcal L$. In what follows, we set $\e = \frac 1 2 \sqrt \d $. That is, a path $P$ in $\SL_j$ is said to be open if $\eta_j (z) \ge \frac 1 2 \sqrt \d k$ for some $z \in P \cap \Z^2$. For a leaf $v$, denote by $O_v$ the number of open ancestors of $v$ (including itself, excluding the root). We say that $v$ is heavy if $O_v \ge 8 \d m$, and light otherwise. Define
 $$
E_1 := \left\{ \sum_{v \in \l } \theta_P (v) 1_{ \{ v \mbox{ is heavy} \} } \le \frac1 2  \ \ \ \mbox{for all } P \in \p_{\k, \d, K} \right\} ,
 $$
 $$
E_2 : = \left\{ \sum_{i=1}^r \eta_{j_i} (z) \le \sqrt {320 C_1 \d} \log N \ \mbox{ for all } z \in V_N, r \le 8 \d m \mbox{ and } 0 \le j_1 < \cdots < j_r \le m-1 \right\} ,
 $$
 $$
E_3 := \left\{ \max_{u \in V_N} H_{K^m}  (u) \le \frac 1 2 \sqrt \d \log N \right\} .
 $$

On the one hand, suppose that $E_1$ occurs. By Corollary~\ref{Cor.Pktree} and (a1), (a2) of Proposition~\ref{Prop.tree}, $\theta_P (v) \le \frac 2 {K^{m-1} d_0 } $ for each $v \in \l $. Then, we can find at least $\frac 1 2 / \frac 2 {K^{m-1} d_0} = \frac {d_0 K^{m-1}} 4 \ge \frac 1 8 \k N$ light leaves, for each $P \in \p_{\k, \d ,K}$.

On the other hand, suppose that $E_2$ and $E_3$ occur. We call $z$ a good vertex if $\eta^{V_{3N}}  (z) \le C \sqrt \d \log N$, where $C : = \sqrt {320 C_1} + 2 $. Then, for each light leaf $v$, $z := P^v$ is a good vertex since
 \begin{eqnarray*}
\eta^{V_{3N}} (z)  & = & \eta (z) + H_{K^m} (z) \le \sum_{j =0}^{m-1} \eta_j (z) 1_{\{ \eta_j (z) \ge  \frac 1 2 \sqrt \d  k \} } +   \sum_{j =0}^{m-1} \eta_j (z) 1_{ \{ \eta_j (z) <  \frac 1 2 \sqrt \d k \} }  +  \frac 1 2 \sqrt \d  \log N
 \\ & \le &
\sqrt {320 C_1 \d} \log N +  m  \frac 1 2 \sqrt \d  k +  \frac 1 2 \sqrt \d  \log N =  (\sqrt {320 C_1 \d}  + \frac 3 2 \sqrt \d ) \log N \le  C \sqrt \d \log N .
 \end{eqnarray*}

Therefore, on $E_1 \cap E_2 \cap E_3$, there are at least $\frac 1 8 \k N$ good vertices on $P$ for all $P \in \p_{\k, \d, K}$. Later, we will show that \\
(i) $\lim_{m \to \infty} \P (E_1) = 1$ for all $K \ge K \( \d, \frac 1 2 \sqrt \d \) $, where $K(\d, \e)$ is given in Proposition~\ref{Lem.Lemma3};\\
 (ii) $\lim_{m \to \infty} \P (E_2) = 1$ for all $K \ge K_3 (\d)$, where $K_3 (\d)$ is given in Lemma~\ref{Lem.tildeK} below;\\
 (iii) $\lim_{m \to \infty} \P (E_3) = 1$.

  Assuming (i), (ii) and (iii), we will conclude that $\lim_{N \to \infty} P(\tilde E) = 1$ by symmetry, where
 $$
\tilde E : = \left\{  \left| \left\{ z \in P : \eta^{V_{3N}} (z) \ge -  C \sqrt \d \log N \right\} \right| \ge \frac 1 {8 } \k N \ \ \mbox{ for all } P \in \p_{\k, \d, K} \right\} .
 $$
By Remark~\ref{Rem.aboutC1}, $C_1 \le 1/2$. Thus $C \le \sqrt {320 / 2} + 2 = 15$. Setting
 $$
K (\delta) =  K \( \delta, \frac 1 2 \sqrt \d \) \vee K_3 (\delta) \ \ \ \mbox{ and } \ \ \ \a (\delta) = \frac {\d}{K(\delta)^2 k(\delta)} ,
 $$
we will conclude the proof of Theorem~\ref{mainthm}. Thus, it remains to check (i), (ii) and (iii).

 \begin{lemma}
Let $K (\d, \e)$ be as in Proposition~\ref{Lem.Lemma3}. For each $K \ge K \( \d, \frac 1 2 \sqrt \d \) $, $\lim_{m \to \infty} \P (E_1) = 1$.
 \end{lemma}
 \begin{proof}
Note that
 \begin{eqnarray*}
 & &
\sum_{u \in \t \setminus \{ \rho \} } \theta_P (u) 1_{\{ u \mbox{ is open}  \}} = \sum_{u \in \t \setminus \{ \rho \} } \sum_{v \in \l } \theta_P (v) 1_{\{  u \mbox{ is an open ancestor of } v \}}
 \\ & = &
\sum_{v \in \l } \theta_P (v) O_v \ge 8 \d m \sum_{v \in \l } \theta_P (v) 1_{\{ v \mbox{ is heavy} \}} .
 \end{eqnarray*}
Consequently, $E_1$ occurs if $   \sum_{u \in \t \setminus \{  \rho \} } \theta_P (u) 1_{\{ u \mbox{ is open} \}} \le 4 \d m$ for all $P \in \p_{\k, \d, K}$. By Proposition~\ref{Lem.Lemma3}, the result holds.
 \end{proof}

 \begin{lemma} \label{Lem.tildeK}
There exists a large $K_3 (\d) > 0$ such that for each $K \ge K_3(\d)$, $\lim_{m \to \infty} \P(E_2) = 1$.
 \end{lemma}
 \begin{proof}
Let $G(z) = \sum_{i=1}^r \eta_{j_i} (z) $. Note that $\eta_j (z)$'s are mutually independent. By \eqref{Eq.C1C2},  $\E G(z)^2 = \sum_{i=1}^r \E \eta_{j_i} (z)^2 \le r (C_1 k + 2 C_2) \le 16 C_1 \d \log N$, where the last inequality holds for $r \le 8 \d m$. Thus,
 $$
\P \( G(z) >  \sqrt {320 C_1 \d} \log N \) \le 2 e^{- \frac 1 {32 C_1 \d \log N} 320 C_1 \d  \log^2 N} = 2 N^{-10}\,.
 $$
A union bound implies $\P(G(z) >  \sqrt {320 C_1 \d} \log N \mbox{ for some } z \in V_N ) \le  N^{- 7}$. It follows that
 $$
\P(E_2^c) \le \sum_{r \le 8 \d m } \frac {m!}{r! (m-r)!} \times N^{- 7} \le  a_\d \sqrt m \( b_\d K^{-7} \) ^m ,
 $$
where $a_\d$ and $b_\d$ are constants depending on $\d$. Pick $K_3 (\d)$ such that $b_\d K_3 (\d)^{-7} < 1$. Then, for each $K \ge K_3 (\delta)$, the right hand side above converges to $0$ as $m \to \infty$.
 \end{proof}

 \begin{lemma}
$\lim_{m \to \infty} \P(E_3) = 1$.
 \end{lemma}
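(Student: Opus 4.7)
The plan is to exploit the Markov field decomposition to show that $H_{K^m}$ has uniformly bounded variance on $V_N$, after which the conclusion is a one-sided Gaussian tail bound followed by a union bound over the $N^2$ vertices.

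\textbf{Step 1 (uniform pointwise variance bound).} Fix $u \in V_N = [0,N)^2$. Since the side length of $B_{K^m}(u)$ is $K^m \le N$, the box is contained in $[-N/2, 3N/2]^2$, hence lies well inside $V_{5N} = [-2N,3N)^2$ (and also inside $V_{3N}$, so the framework of Section~\ref{Sect.hiearchy} applies). The Markov field property of $\eta^{V_{5N}}$ across $\partial B_{K^m}(u)$ gives
$$\eta^{V_{5N}}(u) = H_{K^m}(u) + \eta^{B_{K^m}(u)}(u),$$
with the two summands independent, whence
$$\Var H_{K^m}(u) = \E \eta^{V_{5N}}(u)^2 - \E \eta^{B_{K^m}(u)}(u)^2.$$
I would estimate each term via the Green-function formula \eqref{Eq.covariance} together with the asymptotic \eqref{Eq.axax}. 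Every $z \in \partial V_{5N}$ satisfies $|z-u|_\infty \in [2N, 5N]$, so $a(z-u) = \tfrac{2}{\pi}\log N + O(1)$ uniformly; summing against $\P_u(S_\tau = z)$ yields $\E \eta^{V_{5N}}(u)^2 = \tfrac{2}{\pi}\log N + O(1)$. Similarly, every $z \in \partial B_{K^m}(u)$ has $|z-u|_\infty = K^m/2$, giving $\E \eta^{B_{K^m}(u)}(u)^2 = \tfrac{2}{\pi}\log K^m + O(1)$ (this also follows directly from \eqref{Eq.variance}). Subtracting and using $K^m \le N < K^{m+1}$ gives $\Var H_{K^m}(u) \le \tfrac{2}{\pi}\log K + C =: \sigma^2$, uniformly in $u \in V_N$.

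\textbf{Step 2 (tail and union bound).} Since $H_{K^m}(u)$ is a centered Gaussian with variance at most $\sigma^2$, the standard one-sided tail bound gives $\P(H_{K^m}(u) > \e \log N) \le e^{-\e^2(\log N)^2/(2\sigma^2)}$. Summing over the $N^2$ vertices $u \in V_N$,
$$\P(E_3^c) \le N^2 e^{-\e^2(\log N)^2/(2\sigma^2)} = N^{2 - \e^2(\log N)/(2\sigma^2)} \longrightarrow 0$$
as $N \to \infty$, which establishes $\lim_{m \to \infty} \P(E_3) = 1$.

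The argument is essentially routine and I do not foresee a serious obstacle. The one conceptual point is that although $\Var \eta^{V_{5N}}(u)$ itself grows like $\log N$, once the contribution of all scales up to $K^m$ is peeled off by the Markov decomposition, the residual ``tail'' $H_{K^m}(u)$ only sees scales between $K^m$ and $N$, which differ by at most a factor of $K$; hence its variance is a constant depending only on $K$, comfortably dominated by the threshold $\e \log N$ once one squares.
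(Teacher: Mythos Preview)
Your proof is correct. Both arguments begin with the same variance bound $\Var H_{K^m}(u)\le O(k)$ obtained from the Markov decomposition, but the routes diverge after that. The paper partitions $V_N$ into at most $K^2$ boxes from $\BD_{K^m}$, uses the increment estimate of Lemma~\ref{Lem.Harmdiff} together with the chaining bound Lemma~\ref{Lem.maxinbox} to control $\E\max_{u\in B_i}H_{K^m}(u)$, then applies Borell--TIS (Lemma~\ref{Lem.concentration}) and a union bound over $K^2$ boxes. You instead take a crude union bound over all $N^2$ vertices directly. Your shortcut succeeds because the pointwise tail $e^{-c\,\e^2(\log N)^2}$ is super-polynomially small in $N$, so it comfortably absorbs the factor $N^2$; the chaining machinery of Lemmas~\ref{Lem.Harmdiff}--\ref{Lem.concentration} is not actually needed here. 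The paper's route would become essential only if the threshold were of order $\sqrt{\log N}$ rather than $\log N$, in which case the pointwise tail would be merely polynomial and one would genuinely need to reduce the number of terms in the union bound.
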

 \begin{proof}
Denote by $B_i$'s the boxes in $\BD_{K^m}$ intersecting with $V_N$. For any $i$, we have $u \in V_{3N}^{(\frac 1 {10})}$ and $B_{K^m} (u) \subseteq V_{3N}$ for all $u \in B_i \cap V_N$. Recall that $\k N < K^{m+2}$. Then, by \eqref{Eq.logcorrelated},
 $$
\E H_{K^m} (u)^2 = \E \eta^{V_{3N}} (u)^2 - \E \eta^{B_{K^m} (u)} (u)^2 \le C_1 \log_2 \frac {3 N}{K^m} + 2 C_2 \le C_1 \( 3 k - \log_2 \k \)
 $$
for all $u \in B_i \cap V_N$. Let $M_i : = \max_{u \in B_i \cap V_N} H_{K^m} (u)$. By Lemmas~\ref{Lem.Harmdiff} and \ref{Lem.maxinbox}, we get $\E M_i \le C_6 \sqrt {C_5} $ for all $i$. Consequently, by Lemma~\ref{Lem.concentration}, it holds that
 $$
\P \( M_i >  \frac 1 2 \sqrt \d  \log N \) \le \P \( M_i - \E M_i >  \frac 1 4 \sqrt \d  \log N  \)  \le 2 e^{ - \frac 1 { 32 C_1 ( 3k - \log_2 \k )  } \d \log^2 N  }  \ \ \mbox{ for all } i.
 $$
Since $\k N < K^{m+2}$, there are at most $\( K^3 / \k \) ^2$ such $B_i$'s. A union bound implies that $\P (E_3^c) \le \( K^3 / \k \) ^2 2 e^{ -  \frac 1 { 32 C_1 ( 3k - \log_2 \k )  } \d \log^2 N  } $. Therefore, $\lim_{m \to \infty} \P (E_3) = 1$.
 \end{proof}


\end{document}